\newtheorem{theorem}{Theorem}  
\newtheorem{proposition}{Proposition}
\newtheorem{definition}{Definition}
\newtheorem{lemma}{Lemma}
\newtheorem{remark}{Remark}
\newtheorem{assumption}{Assumption}
\newcommand{\R}{\mathbb{R}}
\def\d{\mathrm{d}}
\title{\LARGE \bf
Sliding mode control for a class of linear infinite-dimensional systems}
\author{Isma\"ila Balogoun, Swann Marx and Franck Plestan
\thanks{The authors are with Nantes Université, École Centrale Nantes, CNRS, LS2N, UMR 6004, F-44000 Nantes, France (e-mail: \{ismaila.balogoun,swann.marx\}@ls2n.fr; franck.plestan@ec-nantes.fr).}
}    
\begin{document}

\maketitle

\begin{abstract}
This paper deals with the stabilization of a class of linear infinite-dimensional systems with unbounded control operators and subject to a boundary disturbance. We assume that there exists a linear feedback law that makes the origin of the closed-loop system globally asymptotically stable in the absence of disturbance. To achieve our objective, we follow a sliding mode strategy and we add another term to this controller  in order to reject the disturbance. We prove the existence of solutions to the closed-loop system and its global asymptotic stability, while making sure the disturbance is rejected.
\end{abstract}

\section{Introduction}
 This paper is concerned with the stabilization of a class of linear infinite-dimensional systems with unbounded control operators and subject to a boundary disturbance (see e.g,  \cite{bensoussan2007representation,curtain2020introduction,lasiecka1991differential} for a review on this class of system). To be more precise, we aim to design a sliding mode control (SMC) \cite{edwards1998sliding,shtessel2014sliding,utkin2013sliding} for the stabilization of boundary or pointwise control for linear partial differential equations (PDEs). We further propose a super-twisting control (STC), where, in contrast with SMC, the control is continuous.
 
 The boundary control of systems described by partial differential equations has received a lot of much since decades. It continues to be an important research focus today because its application in many important engineering systems is natural (see e.g., \cite{bastin2016stability}). Such a problem has been studied in \cite{rosier,russell1973unified,coron2004global,green2016boundary} in the controllability context, in \cite{slemrod1976stabilization,urquiza2005rapid,coron2021boundary,lucoron,zhou2013stabilization,komornik1997rapid} in
terms of stabilization.

In this paper, as mentioned earlier, we focus on the case where infinite-dimensional systems are subject to a disturbance. Therefore, we are not only interested in the stabilization, but also in the rejection of this disturbance. This might be interpreted as a regulation problem. In the case where the disturbance is constant, one can follow a proportional integral (PI) strategy, which is quite well-known in the linear finite-dimensional context, but which is still nowadays an active topic when dealing with PDEs (see e.g., \cite{balogoun2021iss,lhachemi2020pi,lhachemi2019pi,terrand2019adding,paunonen2010internal}). For more complicated cases, i.e. when the disturbance is time-dependant, one may apply the celebrated internal-model approach \cite{paunonen2010internal, deutscher2017output}, which consists, roughly speaking, in adding the dynamics of the disturbance in the loop of the controller. This method needs therefore the knowledge of the dynamics of the controller. Our strategy, based on SMC controllers, is in contrast with the latter one, since only the bound of the disturbance is needed, at the price of assuming that the disturbance matches with the control (i.e., the control and the disturbance are located at the same place).
 
SMC strategy has been proved to be efficient for robust control of nonlinear systems of ordinary differential equations (ODEs)\cite{shtessel2014sliding,utkin2013sliding,edwards1998sliding,young1999control}. Such controllers allow to force, thanks to discontinuous terms, the trajectories of the system to reach in a finite time a manifold, called the  sliding surface, and to evolve on it, this manifold being defined from control objectives. Basically, the design of the control is split into two steps: firstly, a sliding variable is selected such that, once this variable equals zero, global  asymptotic  stability  is  ensured;  secondly,  a discontinuous feedback-law is designed such that the trajectory reaches the sliding surface, that is defined thanks to the sliding variable. On this sliding surface, the disturbance is rejected. 
The generalization of the SMC procedure to the PDEs case is not new. In \cite{orlov1987sliding,orlov2000discontinuous}, a definition of equivalent control (which is the control  applied to the system after reaching the sliding surface, to ensure that the trajectories stays on the surface thereafter) for systems governed by semilinear differential equations in Banach spaces has been proposed.  One can refer also to \cite{levaggi2002infinite,levaggi2002sliding} where differential inclusions and viability theory are combined to design sliding mode controllers for semilinear differential equations in Banach spaces. We also mention the use of spectral reduction methods in \cite{orlov2004robust}. In the last decade, a backstepping strategy has been used to select  a sliding variable \cite{guo2012sliding,pisano2019combined,wang2015sliding,tang2014sliding,liu2015active}. We also refer to these recent papers \cite{liard2022boundary,balogoun2022}, in which the sliding variable is derived from  the gradient of some well-known Lyapunov functional in the hyperbolic context \cite[Section 2.1.2]{bastin2016stability}.  Note also that the SMC feedback-law is discontinuous, which creates chattering phenomena when implementing the control numerically. Therefore, in practical control cases, it is important to reduce this phenomena by providing continuous or smooth controller.

Based on second-order sliding mode techniques (see e.g, \cite[Chapter 4]{shtessel2014sliding}), the super twisting  algorithm has been developed for systems whose the sliding variable admits a relative degree (see \cite[Definition 1.6]{shtessel2014sliding}) equal to $1$. The essential feature of the super twisting control is to require  only the measurement of the sliding variable to guarantee the convergence in finite time to zero of the sliding variable and its derivative. Moreover, the super twisting feedback-law is continuous with respect to the state, and this drastically attenuates the chattering phenomenon.

In this paper, a  strategy different from the ones that have been mentioned earlier is proposed in order to design "classical" sliding mode controls and super-twisting sliding mode controls for general linear infinite-dimensional systems. The sliding variable is defined as the scalar product of the state and an eigenfunction of the adjoint operator of the closed-loop system  without disturbance. This requires measurement of the scalar product of the state with some function. Such a sliding variable allows to directly use well-known results on the stabilization of abstract infinite-dimensional systems with unbounded control operators in the absence of disturbance \cite{fattorini1968boundary,urquiza2005rapid,slemrod1976stabilization} together with well-known results about the finite-time convergence of the sliding variable in the context of the finite dimension \cite{moreno2012strict,polyakov2008lyapunov,utkin2013sliding}. In comparison with \cite{orlov1987sliding,orlov2000discontinuous,levaggi2002infinite,levaggi2002sliding}, the approach proposed in this document allows to define explicitly and systematically the sliding variable for a large class of linear infinite-dimensional systems.

This paper is organized as follows. Section \ref{sec_main} presents a class of linear infinite-dimensional system with an unbounded control operator, the sliding mode based control law, the super-twisting based control law and the main results of the paper. Section \ref{sec:proof} contains the proofs of the main results. Section \ref{sec:exemple} introduces an illustrative example. Finally, Section \ref{sec_conclusion} collects some remarks and introduces some future research lines to be followed.

Notation: Let $c\in \mathbb{C}$, $\mathfrak{Re}(c)$ (resp. $\mathfrak{Im}(c)$) denotes the real part (resp. the imaginary part) of $c$. The set of non-negative real numbers is denoted in this paper by $\mathbb R_+$.  When a function $f$ only depends on the time variable $t$ (resp. on the space variable $x$), its derivative is denoted by $\dot{f}$ (resp. $f^\prime$). Given any subset of $\mathbb{R}$ denoted by $\Omega$ ($\mathbb{R}_+$ or an interval, for instance), $L^p(\Omega)$ denotes the set of (Lebesgue) measurable functions $f$ such that, $\int_\Omega |f(x)|^p dx<+\infty$ when $p\neq +\infty$ and such that $\sup\mathrm{ess}_{x\in \Omega} |f(x)|<+\infty$ when $p=+\infty$. The associated norms are, for $p\neq +\infty$, $\Vert f\Vert_{L^p(\Omega)}^p:= \int_\Omega |f(x)|^p dx $ and, for $p=+\infty$, $\Vert f\Vert_{L^\infty(\Omega)}:= \sup\mathrm{ess}_{x\in \Omega} |f(x)$. For any $p\in [1,\infty]$, the Sobolev space $\mathcal{W}^{1,p}(\Omega)$ is defined by the set $\lbrace f \in L^p(\Omega)\mid f^\prime\in L^p(\Omega)\rbrace$. For  $m\geq 2$,  the Sobolev space $\mathcal{W}^{m,p}(\Omega)$ is defined by the set $\lbrace f \in\mathcal{W}^{m-1,p}(\Omega)\mid f^\prime\in \mathcal{W}^{m-1,p}(\Omega)\rbrace$. We also set $\mathcal{H}^p(\Omega)=\mathcal{W}^{m,2}(\Omega).$ We say that the function $f\in L^p_{loc}(\Omega)$ (resp. $f\in \mathcal{W}_{loc}^{m,p}(\Omega)$)  if the restriction  of $f\chi_K \in L^p(\Omega)$ (resp. $f\chi_K\in \mathcal{W}^{m,p}(\Omega)$) for every compact set $K$ contained in $\Omega$, where $\chi_K$ is the characteristic function.  Given two vector spaces $E$ and $F$, $\mathcal{L}(E,F)$ denotes the space of linear continuous applications from $E$ into $F$. If $E$ is a normed vector space, we denote by $\Vert \cdot\Vert_E$ the norm on $E$. We denote by $E'$ the dual space of $E$, that is, the space of all continuous linear functionals on $E$ and we denote by $ \langle \cdot,\cdot\rangle_{E,E'}$ the dual product on $E\times E'$. We denote by $C(E;F)$ the space of continuous functions from the  space $E$ to the space $F$. Throughout the paper, the field $\mathbb{K}$ is either $\mathbb{R}$ or $\mathbb{C}$.

\section{Main results}
\label{sec_main}
\subsection{Problem Statement}
Let $(H, \langle \cdot , \cdot \rangle_H) $  denotes a  Hilbert
space over the field $\mathbb{K}$ and the corresponding norm is denoted by $\Vert \cdot \Vert_H$. In this paper we are interested in the stabilization (at the origin) problem for the system 
\begin{equation}
\label{system_depart}
\left\{
\begin{split}
&\frac{\d}{\d t} z = A z + B(u+d),\\
&z(0)=z_0,
\end{split}
\right.
\end{equation}
where $z(t) \in H$ is the state, $u(t)\in  \mathbb{K}$ is the control input and $d(t)\in\mathbb{K}$  is an unknown disturbance.
In system \eqref{system_depart}, $A:D(A)\subseteq H\rightarrow H$ is a linear operator with $D(A)$ densely defined in $H$ and  $B\in \mathcal{L}(\mathbb{K},D(A^*)')$, with $A^*$ the adjoint operator of $A$. Our objective is to provide a design method so that system \eqref{system_depart} is globally stabilized despite the disturbance $d$. To do so, we will follow the sliding mode strategy. 

This strategy can be applied thanks to the following set of assumptions.
\begin{assumption}\label{assump1}
The following statements hold. 
\begin{itemize}
\item[(i)]  The operator $A:D(A)\subseteq H\rightarrow H$ generates a strongly continuous semigroup, that is denoted by $(\mathbb{T}(t))_{t\geq0}$. 
\item[(ii)] The operator $B$  is admissible\footnote{See e.g \cite[Section 4.2]{tucsnak2009observation}} for $(\mathbb{T}(t))_{t\geq0}$.
\item [(iii)]Let $T>0$. The pair $(A, B)$ is approximately controllable in $H$ in time $T$, i.e \begin{align}
    \label{observabilité}\notag
    &\forall \varphi \in D(A^*),\quad\forall t \in [0, T],\\ &B^*\mathbb{T^*}(T-t)\varphi=0 \Longrightarrow \varphi=0,
\end{align}

 where $B^*\in \mathcal{L}(D(A^*),\mathbb{K})$ is the adjoint operator of $B$.
\item[(iv)]There exists an operator $L:D(L)\to \mathbb{K}$ such that the operator \begin{equation}
\left\{
\begin{split}
&A_L = A +BL,\\
&D(A_L)=\{z\in D(L); (A+BL)z\in H \},
\end{split}
\right.
\end{equation} is the infinitesimal generator of a strongly continuous semigroup $(\mathbb{S}(t))_{t\geq0}$ on $H$  and the  origin of the following  system\begin{equation}
\label{system-stable}
\left\{
\begin{split}
&\frac{\d}{\d t} z = (A +BL)z,\\
&z(0)=z_0,
\end{split}
\right.
\end{equation} is globally asymptotically stable.
\end{itemize}
\end{assumption}
 Items (i) and (ii) allow to state the well-posedness of system \eqref{system_depart} in $H$ and Item (iii) refers to  a  controllability property of the system \eqref{system_depart}. This property is used in order to ensure finite-time stability of the sliding variable. Finally,  Item (iv) of  Assumption \ref{assump1} refers to a stabilizability property of system \eqref{system_depart}, needed to ensure that, without disturbance, the system can be stabilized.

The disturbance $d$ is not supposed to be known entirely, but we assume the knowledge of its bound. 
\begin{assumption}\label{assump2}
The unknown disturbance $d$ is supposed to be uniformly bounded measurable, i.e $\vert d(t)\vert\leq K_d$ for some $K_d>0$ and for all $t\geq 0$.
\end{assumption}
\begin{remark}
Note that Item (iv) of Assumption \ref{assump1} has been proven in \cite[Theorem 2.1]{urquiza2005rapid} in the case where the pair $(A,B)$ is exactly controllable in time $T$.
\end{remark}
Our goal is to find a state feedback control $u$ which allows to reject the disturbance and  to globally asymptotically stabilize the system \eqref{system_depart} around $0$.  Precisely, we are looking for a sliding surface on which the system \eqref{system_depart} becomes the system \eqref{system-stable} in a finite time. According to the item (iv) of Assumption \ref{assump1}, we know  that $0$ is globally asymptotically stable for the system \eqref{system-stable}. The next section will provide a definition of this sliding surface (and its related sliding variable), the associated sliding mode controllers and the associated super-twisting controllers.

\subsection{Sliding surface}
Let $\varphi \in D(A_L^*):=\{\varphi\in H\mid \exists c>0,\forall \phi\in D(A_L), \vert\langle \varphi, A_L(\phi)\rangle_H\vert\leq c\Vert\phi\Vert_H\}\subset D(A^*)$ is an eigenfunction of the adjoint operator of $A_L$ and $\lambda$ the eigenvalue associated with $\varphi$.
We introduce the following sliding surface $\Sigma$ 
$$\Sigma:=\left\{z \in H\mid  \langle \varphi, z\rangle_H=0\right\}.$$
Its related sliding variable
 $\sigma: \R_+\to \mathbb{K}$ is   defined by
\begin{equation}
 \label{sigma}
     \sigma(t):=\langle \varphi, z(t)\rangle_H
     \end{equation}
for any solution $z$ of \eqref{system_depart}.
This sliding variable represents the scalar product between the state and an eigenfunction of $A_L^*$. 

In this paper, we are interested in the design  of a sliding mode controller and  a super twisting controller. In the following section, we begin with the design of the sliding mode control.

\subsubsection{Sliding mode Control}
Since $\varphi$ is an eigenfunction which is different from $0$, then according to Item (iii) of Assumption \ref{assump1}, setting $t=T$, we obtain $B^*\varphi\neq 0$. Therefore, we can consider the  sliding mode controller $u$ defined by, for a.e $t\geq 0$,
 \begin{equation} \label{slidingcontrol}  u(t) = Lz(t)-\frac{1}{B^*\varphi}\bigg(\lambda\sigma(t)+ K\mathrm{sign}(\sigma(t))\bigg),  \end{equation}
 where  $\sigma$ is given  in \eqref{sigma}, $K$  is a positive constant that will be chosen later. Moreover,  the  set-valued function   $\mathrm{sign}$ is defined by
$$ \mathrm{sign}(s)=\left\{\begin{array}{ll}
\, \frac{s}{\vert s\vert} & \text{if } s\neq0,  \\
   \, [-1,1] & \text{if }  s=0.  \\
\end{array}\right.
$$Note that, since $B^*\in \mathcal{L}(D(A^*),\mathbb{K})$, then  $B^*\varphi$ is a scalar. Thus, we make the following  assumption about the constant $K$.
\begin{assumption}\label{assump3}
The constant $K$ is chosen such that  $\frac{K}{\vert B^*\varphi\vert} >K_d$.
\end{assumption}
Then, the closed$-$loop system \eqref{system_depart}$-$\eqref{slidingcontrol} can be written as
\begin{equation}
\label{system-final}
\left\{
\begin{split}
&\frac{\d}{\d t} z \in A_Lz+B\Bigg(d-\frac{1}{B^*\varphi}\bigg(\lambda\sigma(t)+ K\mathrm{sign}(\sigma(t))\bigg)\Bigg),\\
&z(0)=z_0.
\end{split}
\right.
\end{equation}
Formally, the derivative of $\sigma$ along the trajectory of \eqref{system-final} yields,  for all $t\geq 0$
\begin{align}\label{sliding mode equation}\notag
   \dot\sigma(t)&=\langle \varphi,\frac{\d}{\d t} z(t)\rangle_H\\\notag
   &\in\langle \varphi,A_Lz(t)\rangle_H\\\notag &+B^*\varphi\Bigg(d(t)-\frac{1}{B^*\varphi}\bigg(\lambda\sigma(t)+ K\mathrm{sign}(\sigma(t))\bigg)\Bigg)\\
   &\in \langle A_L^* \varphi,z(t)\rangle_H \\\notag&+B^*\varphi\Bigg(d(t)-\frac{1}{B^*\varphi}\bigg(\lambda\sigma(t)+ K\mathrm{sign}(\sigma(t))\bigg)\Bigg)\\\notag
   &\in \lambda\langle \varphi,z(t)\rangle_H \\\notag&+B^*\varphi\Bigg(d(t)-\frac{1}{B^*\varphi}\bigg(\lambda\sigma(t)+ K\mathrm{sign}(\sigma(t))\bigg)\Bigg)\\\notag
    \dot \sigma (t)&\in  B^*\varphi\bigg(d(t)-\frac{K}{B^*\varphi}  \mathrm{sign}(\sigma(t))\bigg)
\end{align} where, before the last equation, we used $A_L^* \varphi=\lambda\varphi$. Then, the following holds, for all $t\geq 0$
\begin{equation}\label{reach}
\begin{array}{ll}
    \dfrac{1}{2}\dfrac{d}{dt}\vert \sigma(t)\vert^2&= \mathfrak{Re}\big(\bar{\sigma}(t)\dot \sigma (t)\big)\\
    &=\mathfrak{Re}\bigg(\bar{\sigma}(t)B^*\varphi\big(d(t)-\frac{K}{B^*\varphi}  \mathrm{sign}(\sigma(t))\big)\bigg)\\
    \quad&\leq -(K-\vert B^*\varphi \vert K_d)\vert \sigma(t)\vert.
\end{array}
\end{equation}Therefore, separating variables and integrating inequality \eqref{reach} over the time interval $0\leq s\leq t$, we obtain
\begin{equation}
    \vert \sigma(t)\vert\leq \vert \sigma(0)\vert -(K-\vert B^*\varphi \vert K_d)t.
\end{equation}
Thus, there exists a finite time $t_r>0$, for which we know a bound that will be given later on, such that $\sigma(t)=0$ for any $t>t_r$. This means that the system \eqref{system-final} reaches  the sliding surface $\Sigma$ in finite time $t_r$ and remains on it. Since $\sigma(t)=0$ for any $t>t_r$, then $\dot\sigma(t)=0$ for any $t>t_r$. Thus, from \eqref{sliding mode equation}, we have $d(t)-\frac{K}{B^*\varphi}  \mathrm{sign}(\sigma(t)) = 0$ for any $t>t_r$. As a consequence, the system \eqref{system-final} can be rewritten as   \eqref{system-stable} on the sliding surface, which is globally asymptotically stable around $(0,0)$  from the item (iv) of Assumption \ref{assump1}. 

The next section focuses on the design of the super twisting control.

\subsubsection{Super twisting control}
In this section, we make the following assumption about the disturbance.
\begin{assumption}\label{assump4}
The disturbance $d(\cdot)$ is  globally Lipschitz over $\mathbb{R}_+$ and   there exists a known positive constant $C$ such that, for a.e $t\in\mathbb{R}_+$, \begin{equation}\label{bonne_d}
    \vert \dot{d}(t)\vert\leq C.
\end{equation} 
\end{assumption}
We assume that $\mathbb{K}=\mathbb{R}$. We do not treat the complex case, since we are not aware whether there exist super-twisting controllers for system whose state is in $\mathbb{C}$. We consider the  super twisting controller $u$ defined by, for all $t\geq 0$,
 \begin{equation}\label{super twisting control} 
\left\{
\begin{aligned}
& u(t) = Lz(t)\\& +\frac{1}{B^*\varphi}\bigg(-\lambda\sigma(t) -\alpha\vert\sigma(t)\vert^{\frac{1}{2}} \mathrm{sign}(\sigma(t))+v(t)\bigg),\\
&\dot v(t)\in-\beta\mathrm{sign}(\sigma(t)),
\end{aligned}
\right.
\end{equation}
 where  $\sigma$ is given  in \eqref{sigma}, $\alpha$ and $\beta$  are  positive constants which will be chosen later.
 
 Formally, the derivative of $\sigma$ along the trajectory of \eqref{system_depart}$-$\eqref{super twisting control} yields, for all $t\geq 0$
 \begin{align}\label{sliding mode equation super twisting}\notag
   \dot\sigma(t)&=\langle \varphi,\frac{\d}{\d t} z(t)\rangle_H\\\notag
   &=\langle \varphi,A_Lz(t)\rangle_H +B^*\varphi d(t)-\lambda\sigma(t)+v(t) \\\notag&-\alpha\vert\sigma(t)\vert^{\frac{1}{2}} \mathrm{sign}(\sigma(t))\\
   &= \lambda \sigma(t)-\lambda \sigma(t)+B^*\varphi d(t) +v(t)\\\notag&-\alpha\vert\sigma(t)\vert^{\frac{1}{2}} \mathrm{sign}(\sigma(t)) \\\notag
   & = -\alpha\vert\sigma(t)\vert^{\frac{1}{2}} \mathrm{sign}(\sigma(t))+v(t)+B^*\varphi d(t).
\end{align}Then, according to the following transformation \begin{equation}\label{transformation}
    w(t)=B^*\varphi d(t)+v(t),
\end{equation} we obtain 
\begin{equation}\label{super_twisting_equation}
\left\{
\begin{aligned}
&\dot{\sigma}(t) =-\alpha\vert \sigma(t)\vert^{\frac{1}{2}}\mathrm{sign}(\sigma(t))+w(t),\\
&\dot{w}(t) \in B^*\varphi \dot{d}(t) -\beta\mathrm{sign}(\sigma(t)).
\end{aligned}
\right.
\end{equation}
From  \cite[Theorem 1]{seeber2017stability}, all trajectories of \eqref{super_twisting_equation} converge to zero in finite time.
\begin{proposition}(\cite[Theorem 1]{seeber2017stability})\label{super_twisting_proposition}
Assuming that \begin{equation}\label{parametre}
     \beta>\vert B^*\varphi \vert C\quad\mbox{ and } \alpha> \sqrt{\beta +\vert B^*\varphi \vert C},
 \end{equation}there exists a finite time $t_r>0$ 
such that $\sigma(t)=0$ and $w(t)=0$ for any $t>t_r$.
\end{proposition}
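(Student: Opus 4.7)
The plan is to invoke the strict Lyapunov function methodology developed by Moreno--Osorio for the super-twisting algorithm and refined in \cite{seeber2017stability}, interpreting the right-hand side of \eqref{super_twisting_equation} as a differential inclusion. The key observation is that $B^*\varphi\,\dot d(t)$ is a measurable perturbation bounded by $|B^*\varphi|C$ by virtue of Assumption \ref{assump4}, so that the hypotheses \eqref{parametre} are exactly the classical super-twisting tuning rules with effective perturbation magnitude $|B^*\varphi|C$ replacing the nominal bound.

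Concretely, I would first pass to the homogenizing coordinates $\zeta_1:=|\sigma|^{1/2}\mathrm{sign}(\sigma)$ and $\zeta_2:=w$, so that $|\sigma|=\zeta_1^2$ and, for $\zeta_1\neq 0$, the dynamics take the form
\begin{equation*}
\dot\zeta \in \frac{1}{|\zeta_1|}M\zeta + \begin{pmatrix}0\\ B^*\varphi\,\dot d(t)\end{pmatrix},\qquad M:=\begin{pmatrix}-\alpha/2 & 1/2\\ -\beta & 0\end{pmatrix},
\end{equation*}
where $M$ is Hurwitz for any $\alpha,\beta>0$. I would then seek a quadratic Lyapunov function $V(\zeta)=\zeta^\top P\zeta$ with a symmetric positive definite $P$ chosen so that along solutions
\begin{equation*}
\dot V(\zeta(t))\leq -\gamma\, V(\zeta(t))^{1/2}
\end{equation*}
for some $\gamma>0$. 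The existence of such a $P$ reduces to a linear matrix inequality in which the perturbation term is absorbed through the bound $|B^*\varphi|C$: the condition $\beta>|B^*\varphi|C$ dominates the perturbation entering $\dot\zeta_2$, while $\alpha>\sqrt{\beta+|B^*\varphi|C}$ provides the cross-coupling margin needed for feasibility, in complete analogy with \cite{seeber2017stability}.

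From $\dot V\leq -\gamma V^{1/2}$, the standard comparison lemma yields $V(\zeta(t))=0$ for every $t\geq t_r:=2V(\zeta(0))^{1/2}/\gamma$, i.e.\ $\sigma(t)=w(t)=0$ past the reaching time, which is exactly the conclusion of the proposition. The main obstacle in making this argument fully rigorous is the lack of regularity: $V$ is only locally Lipschitz at $\zeta_1=0$ and the right-hand side of \eqref{super_twisting_equation} is multivalued, so solutions have to be understood in the sense of Filippov and $\dot V$ has to be interpreted through the Clarke generalized gradient (or a set-valued Lie derivative). This non-smooth analysis is precisely what is carried out in \cite[Theorem 1]{seeber2017stability}, so my plan for Proposition \ref{super_twisting_proposition} reduces, in practice, to verifying that \eqref{super_twisting_equation} fits into the framework of that theorem with perturbation bound $\delta=|B^*\varphi|C$ and then quoting the resulting finite-time convergence estimate.
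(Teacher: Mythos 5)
Your proposal is correct and follows essentially the same route as the paper: the paper proves Proposition \ref{super_twisting_proposition} purely by observing that \eqref{super_twisting_equation} is a standard perturbed super-twisting system whose perturbation derivative is bounded by $\vert B^*\varphi\vert C$ (via Assumption \ref{assump4}) and then invoking \cite[Theorem 1]{seeber2017stability} with the tuning conditions \eqref{parametre}, which is exactly the reduction you arrive at in your final paragraph. Your additional sketch of the quadratic Lyapunov function in the homogenizing coordinates is a faithful summary of the internals of the cited result rather than an alternative argument, so no gap arises.
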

 Then, the closed$-$loop system \eqref{system_depart}$-$\eqref{super twisting control} can be written as
\begin{equation}
\label{system-final_super_twisting}
\left\{
\begin{split}
&\frac{\d}{\d t} z= A_Lz+B\Bigg(\frac{1}{B^*\varphi}\bigg(-\lambda\sigma(t) -\alpha\vert\sigma(t)\vert^{\frac{1}{2}} \mathrm{sign}(\sigma(t))\\&+w(t)\bigg)\Bigg),\\
&\dot{w}(t) \in B^*\varphi \dot{d}(t) -\beta\mathrm{sign}(\sigma(t)),\\
&z(0)=z_0\in H, w(0)=w_0\in\mathbb{R}.
\end{split}
\right.
\end{equation}

\subsubsection{Main results} 
The equations \eqref{sliding mode equation} and \eqref{super_twisting_equation} are understood in the sense of Filippov \cite[Chapter 2]{filippov2013differential}, we recall the definition.  
\begin{definition}A Filippov solution of \eqref{sliding mode equation} (resp. of \eqref{super_twisting_equation}) is an absolutely continuous map that satisfies \eqref{sliding mode equation} (resp. \eqref{super_twisting_equation}) for almost all $t\geq 0$.\end{definition}
The solutions of \eqref{system-final} are understood in the sense of the following definition.
\begin{definition}\label{def:solution}
Let $z_0\in H$. We say that the map $z:[0,\infty)\to H$ is a mild  solution of \eqref{system-final}, if $z\in C([0,\infty);H)\cap \mathcal{H}^1_{loc}([0,\infty);D(A^*)')$ such that, for all $t\in [0,\infty)$, \begin{equation}\label{mildsolu_abstrait}
        z(t)=\mathbb{S}(t)z_0+\int^t_0 \mathbb{S}(t-s)Bh(s)ds,
    \end{equation} where $(\mathbb{S}(t))_{t\geq0}$ is the strongly continuous semigroup generated by the operator $A_L$ and  $h:[0,\infty)\to \mathbb{K}$ is in $ L^2_{loc}([0,\infty);\mathbb{K})$ and satisfies, 
 for a.e $t\geq 0$,
\begin{equation}\label{v_inclusion}
    h(t)\in -\frac{1}{B^*\varphi}\bigg(\lambda\sigma(t)+ K\mathrm{sign}(\sigma(t))\bigg)+ d(t)
\end{equation}
 with $\sigma$ given in \eqref{sigma}.
\end{definition}
The following definition indicates how  the solutions of \eqref{system-final_super_twisting} are understood. 
\begin{definition}\label{def:solution1}
Let $z_0\in H$ and $w_0\in\mathbb{R}$. We say that the map $z:[0,\infty)\to H$ and $w:[0,\infty)\to \mathbb{R}$ is a mild  solution of \eqref{system-final_super_twisting}, if $z\in C([0,\infty);H)\cap \mathcal{H}^1_{loc}([0,\infty);D(A^*)')$ and $w$ is absolutely continuous such that, \begin{equation}\label{mildsolu_abstrait_twisting}
       \mbox{for all }t\in [0,\infty),\quad z(t)=\mathbb{S}(t)z_0+\int^t_0 \mathbb{S}(t-s)B\omega(s)ds
    \end{equation}  and 
 
\begin{equation}\label{w_inclusion}
 \mbox{for a.e }t\in [0,\infty),\quad \dot{w}(t) \in B^*\varphi \dot{d}(t) -\beta\mathrm{sign}(\sigma(t)),
\end{equation}where $(\mathbb{S}(t))_{t\geq0}$ is the strongly continuous semigroup generated by the operator $A_L$ and \begin{equation}\label{omega}
    \omega(t)= \frac{1}{B^*\varphi}\bigg(-\lambda\sigma(t) -\alpha\vert\sigma(t)\vert^{\frac{1}{2}} \mathrm{sign}(\sigma(t))+w(t)\bigg)
\end{equation} 
 with $\sigma$ given in \eqref{sigma}.
\end{definition}
Note that, Definition \ref{def:solution} and Definition \ref{def:solution1} are based on the concept of mild solution\footnote{See e.g \cite[Definition 4.1.5]{tucsnak2009observation}}.

Before presenting the  results of this paper, we present the following definition of  the equilibrium point of systems \eqref{system-final} and \eqref{system-final_super_twisting}. 

\begin{definition}
\begin{enumerate}
    \item We say that $\tilde{z}\in H$   is an equilibrium point of system \eqref{system-final} , if $\tilde z\in D(A_L)$   and there exists $\tilde z^* \in [-K_d,K_d]-\frac{K}{B^*\varphi}  \mathrm{sign}(\langle \varphi, \tilde z\rangle_H)$   such that 
\begin{equation}\label{equilib_sliding_mode}
    A_L\tilde z+B\tilde z^*=0.
\end{equation}
\item We say that  $(\tilde{z},\tilde{w})\in H\times\mathbb{R}$ is an equilibrium point of system  \eqref{system-final_super_twisting}, if  $(\tilde z,\tilde w)\in D(A_L)\times\mathbb{R}$  and there exists  $\tilde z^* \in B^*\varphi [-C,C] -\beta\mathrm{sign}\big(\langle \varphi, \tilde z\rangle_H\big)$ such that
\begin{align}\label{equilib_super_twisting1}\notag
    A_L\tilde z+B\Bigg(\frac{1}{B^*\varphi}\bigg(-\lambda\langle \varphi, \tilde z\rangle_H&+\tilde w \\-\alpha\vert\langle \varphi, \tilde z\rangle_H\vert^{\frac{1}{2}} \mathrm{sign}\big(\langle \varphi, \tilde z\rangle_H\big)\bigg)\Bigg)=0
\end{align} and
\begin{equation}
\label{equilib_super_twisting2}
  \tilde z^* =0.
\end{equation}
\end{enumerate}
\end{definition}
\begin{remark}
One can check that $0\in H$ (resp. $(0,0)\in H\times \R$) is the unique equilibrium point of \eqref{system-final} (resp. \eqref{system-final_super_twisting}). 
\end{remark}

 The main results of this paper can be formulated as follows:
 
\begin{theorem}[Existence of  solutions]\label{main}
\begin{enumerate}
\item Assume that Assumption \ref{assump1}, Assumption \ref{assump2} and Assumption \ref{assump3}  are satisfied. For any initial condition $z_0\in H$, the system \eqref{system-final}  admits a mild solution.
\item Assume that Assumption \ref{assump1}, Assumption \ref{assump4} and Equation \eqref{parametre} are satisfied. For any initial condition  $z_0\in H$ and $w_0\in\R$,  the system \eqref{system-final_super_twisting} admits a mild solution.
\end{enumerate}
\end{theorem}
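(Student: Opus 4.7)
The overall plan is to decouple the infinite-dimensional dynamics of $z$ from those of the sliding variable by exploiting that $\varphi$ is an eigenfunction of $A_L^*$; this lifts at the semigroup level to $\mathbb{S}^*(t)\varphi=e^{\lambda t}\varphi$, which makes the pairing of $\varphi$ with any mild solution computable in closed form. For both items the strategy is the same: first, solve as a standalone problem the finite-dimensional Filippov inclusion that the sliding variable would obey if everything were consistent; second, from that trajectory extract a measurable selection yielding an $L^{\infty}_{loc}$ (hence $L^2_{loc}$) input; third, define $z$ through the variation-of-constants formula \eqref{mildsolu_abstrait} (resp.\ \eqref{mildsolu_abstrait_twisting}) and verify that $\langle\varphi,z(t)\rangle_H$ does coincide with the $\sigma$ of stage one, which closes the construction.

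For item 1, the scalar Filippov inclusion $\dot\sigma\in B^*\varphi\,d(t)-K\,\mathrm{sign}(\sigma)$ with $\sigma(0)=\langle\varphi,z_0\rangle_H$ admits an absolutely continuous global solution by classical theory (\cite[Chapter 2]{filippov2013differential}), together with a measurable selection $g(\cdot)\in\mathrm{sign}(\sigma(\cdot))$ such that $\dot\sigma=B^*\varphi\,d-K\,g$ almost everywhere. I would then set
\begin{equation*}
h(t):=d(t)-\tfrac{1}{B^*\varphi}\bigl(\lambda\sigma(t)+K\,g(t)\bigr),
\end{equation*}
which lies in $L^{\infty}_{loc}([0,\infty);\mathbb{K})$, define $z$ through \eqref{mildsolu_abstrait}, and invoke the admissibility of $B$ (item (ii) of Assumption \ref{assump1}) to conclude that $z\in C([0,\infty);H)$; the required Sobolev-in-time regularity into $D(A^*)'$ follows from reading $\dot z=A_L z+B(h-Lz)$ in $D(A^*)'$. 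For item 2, I would run the same program with the planar inclusion \eqref{super_twisting_equation} in place of the scalar one: its global solvability from $(\langle\varphi,z_0\rangle_H,w_0)$ is standard and its finite-time behavior is recorded in Proposition \ref{super_twisting_proposition}; then $\omega$ from \eqref{omega} plays the role of $h$ in the variation-of-constants formula.

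The main obstacle, which is where the eigenfunction structure pays off, is the consistency step. Pairing $\varphi$ with the variation-of-constants formula and using that $\varphi\in D(A_L^*)\subset D(A^*)$ together with the admissibility duality, I would obtain
\begin{equation*}
\langle\varphi,z(t)\rangle_H = e^{\lambda t}\langle\varphi,z_0\rangle_H + B^*\varphi\int_0^t e^{\lambda(t-s)} h(s)\, ds,
\end{equation*}
in which the identity $\mathbb{S}^*(t)\varphi=e^{\lambda t}\varphi$ follows from $A_L^*\varphi=\lambda\varphi$ by a standard semigroup computation. Differentiating in time gives that $\tilde\sigma(t):=\langle\varphi,z(t)\rangle_H$ satisfies $\dot{\tilde\sigma}=\lambda\tilde\sigma+B^*\varphi\,h$ a.e.; substituting the explicit form of $h$ and using $\dot\sigma=B^*\varphi\,d-K\,g$ collapses this to $\tfrac{d}{dt}(\tilde\sigma-\sigma)=\lambda(\tilde\sigma-\sigma)$ with $\tilde\sigma(0)=\sigma(0)$, so Gronwall forces $\tilde\sigma\equiv\sigma$. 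The delicate point is to fix a single measurable selection of $\mathrm{sign}(\sigma(\cdot))$ on the zero set and use it both in stage one and in the inclusion \eqref{v_inclusion} (resp.\ \eqref{w_inclusion}), so that the same branch appears on both sides of the consistency check; the super-twisting case is handled verbatim, since $w$ is constructed independently of $z$ and it is only the $\sigma$-component that needs to be reconciled with $\langle\varphi,z\rangle_H$.
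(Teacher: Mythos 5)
Your proposal follows essentially the same route as the paper: solve the finite-dimensional Filippov inclusion for the sliding variable as a standalone problem, feed the resulting measurable selection into the variation-of-constants formula, and use the eigenfunction identity $A_L^*\varphi=\lambda\varphi$ to show that $\langle\varphi,z(t)\rangle_H$ coincides with the pre-computed $\sigma$ because their difference solves the homogeneous linear ODE $\dot g=\lambda g$, $g(0)=0$. The only detail worth adding is that item (ii) of Assumption \ref{assump1} gives admissibility of $B$ for the semigroup generated by $A$, whereas \eqref{mildsolu_abstrait} and \eqref{mildsolu_abstrait_twisting} require admissibility for the semigroup $(\mathbb{S}(t))_{t\geq 0}$ generated by $A_L=A+BL$; the paper transfers this via \cite[Proposition 4.2]{hansen1997new}.
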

The next result of this paper is stated as follows:
\begin{theorem}[Global asymptotic stability]\label{Global asymptotic}
\begin{enumerate}
    \item Assume that Assumption \ref{assump1}, Assumption \ref{assump2} and Assumption \ref{assump3}  are satisfied. For any initial condition $z_0\in H$, $0\in H$  is globally asymptotically stable for \eqref{system-final}.
    \item  Assume that Assumption \ref{assump1}, Assumption \ref{assump4} and Equation \eqref{parametre} are satisfied. For any initial condition  $(z_0, w_0)\in H\times\R$,    $\begin{pmatrix}0\\0\end{pmatrix}\in H\times\R$ is globally asymptotically stable for \eqref{system-final_super_twisting}.
\end{enumerate}
\end{theorem}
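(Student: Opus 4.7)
The plan is to split the time axis into a reaching phase $[0,t_r]$ and a sliding phase $[t_r,\infty)$, exploiting the finite-time convergence of the sliding variable already established in the computation leading to \eqref{reach} for Part 1 and in Proposition \ref{super_twisting_proposition} for Part 2, together with Item (iv) of Assumption \ref{assump1}. Attractivity will come almost for free from the reduction of the closed-loop dynamics to \eqref{system-stable} on the sliding surface; the delicate point is the $\varepsilon$--$\delta$ stability estimate, which has to cover both phases.

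For attractivity I would first observe that, once $t\geq t_r$, one has $\sigma(t)\equiv 0$ in Part 1 (respectively $\sigma(t)\equiv w(t)\equiv 0$ in Part 2). Because $K/\vert B^*\varphi\vert>K_d$ by Assumption \ref{assump3} and $\beta>\vert B^*\varphi\vert C$ by \eqref{parametre}, the Filippov selection in \eqref{v_inclusion} (resp.\ \eqref{omega}--\eqref{w_inclusion}) is forced to satisfy $h(t)\equiv 0$ (resp.\ $\omega(t)\equiv 0$) on $[t_r,\infty)$, so the mild-solution formula reduces to $z(t)=\mathbb{S}(t-t_r)z(t_r)$. Item (iv) of Assumption \ref{assump1} then yields $\Vert z(t)\Vert_H\to 0$.

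For stability I would quantify the reaching time. In Part 1, \eqref{reach} directly gives
\begin{equation*}
t_r\leq \frac{\vert\sigma(0)\vert}{K-\vert B^*\varphi\vert K_d}\leq \frac{\Vert\varphi\Vert_H\,\Vert z_0\Vert_H}{K-\vert B^*\varphi\vert K_d},
\end{equation*}
while in Part 2 a quantitative version of Proposition \ref{super_twisting_proposition}, obtained from the strict Lyapunov construction of \cite{moreno2012strict,seeber2017stability}, yields $t_r\leq C_\star(\vert\sigma(0)\vert,\vert w(0)\vert)$ with $C_\star$ continuous and vanishing at the origin, together with a bound on $\sup_{[0,t_r]}(\vert\sigma(t)\vert+\vert w(t)\vert)$ by a continuous function of $(\vert\sigma(0)\vert,\vert w(0)\vert)$. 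Plugging these bounds into the mild-solution formulas \eqref{mildsolu_abstrait}, \eqref{mildsolu_abstrait_twisting}, and using admissibility of $B$ for $(\mathbb{S}(t))_{t\geq 0}$ together with local boundedness of $\Vert\mathbb{S}(t)\Vert$, I expect to reach an estimate of the form
\begin{equation*}
\sup_{t\in[0,t_r]}\Vert z(t)\Vert_H\leq M_{t_r}\Vert z_0\Vert_H+M'_{t_r}\sqrt{t_r}\,\bigl(C_1+C_2\Vert z_0\Vert_H\bigr),
\end{equation*}
in which the non-vanishing constants coming from $K$ (resp.\ from the super-twisting jump terms) are absorbed by the $\sqrt{t_r}$ factor. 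Since $t_r\to 0$ as the initial data tend to zero, this delivers the required $\varepsilon$--$\delta$ property; combined with the semigroup estimate on $[t_r,\infty)$ and attractivity, it proves global asymptotic stability.

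The hard part will be the reaching-phase analysis for the super-twisting controller, because the auxiliary variable $w$ obeys a differential inclusion and is not monotonically driven to zero on $[0,t_r]$. The cleanest route, in my view, is to borrow the strict Lyapunov function from \cite{seeber2017stability}, quadratic in the pair $(\vert\sigma\vert^{1/2}\mathrm{sign}(\sigma),w)$, which simultaneously yields the finite-time convergence already invoked, a quantitative bound on $t_r$ in terms of the initial pair, and the uniform control of $(\sigma(t),w(t))$ on $[0,t_r]$ needed above. A short separate argument, estimating $w$ directly from \eqref{w_inclusion}, then shows that $\vert w(t)\vert$ remains small when $(\vert\sigma_0\vert,\vert w_0\vert)$ is small, which combined with the mild-solution bound for $z$ yields stability of the equilibrium $(0,0)\in H\times\mathbb{R}$.
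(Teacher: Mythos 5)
Your proposal is correct and follows essentially the same route as the paper: split into a reaching phase and a sliding phase, show the equivalent control vanishes on the sliding surface so the dynamics reduce to \eqref{system-stable}, and obtain Lyapunov stability on $[0,t_r]$ by bounding $t_r$ linearly in the initial data (via Cauchy--Schwarz for $\vert\sigma(0)\vert$ and \cite[Theorem 2]{moreno2012strict} for the super-twisting case) and absorbing the control term through the admissibility estimate and the $\sqrt{t_r}$ factor. The only step you leave implicit, and which the paper carries out explicitly, is the rigorous derivation of the $\sigma$-dynamics along a \emph{mild} solution via \cite[Remark 4.2.6]{tucsnak2009observation}, so that $\sigma$ is a genuine Filippov solution of the reduced ODE rather than relying on the formal computation \eqref{reach}.
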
 

\section{Proof of Theorem \ref{main} and Theorem \ref{Global asymptotic} }\label{sec:proof}
\subsection{Proof of Theorem \ref{main}}
The proof of Theorem \ref{main} is divided into two parts. In the first part, the proof of the  Theorem \ref{main} is presented in the case of system \eqref{system-final}. The second part deals with the proof of Theorem \ref{main} in the case of system \eqref{system-final_super_twisting}.

Let us start the proof of the first part.
\subsubsection{Sliding mode control}
We consider the following ODE
\begin{equation} \label{ODE}
\left\{
\begin{aligned}
&\dot \gamma (t)\in  B^*\varphi\bigg(d-\frac{K}{B^*\varphi}  \mathrm{sign}(\gamma(t))\bigg), &t\in\R_+, \\
& \gamma (0)=  \gamma_0\in \R, \\
\end{aligned}
\right.
\end{equation}
 The system  \eqref{ODE} is understood in the sense of Filippov \cite{filippov2013differential}.  In the next lemma, we state that there exists a unique solution to \eqref{ODE} and that \eqref{ODE} is stabilized in finite-time.
 
\begin{lemma} \label{lem:ODE}
Assume that Assumption \ref{assump2} hold. Then, the ODE \eqref{ODE} admits a unique Filippov solution. Moreover,  there exists $t_r>0$ such that,  for any Filippov solution $\gamma$ of \eqref{ODE},   $$ \gamma(t)=0,\, \forall \,t\geq t_r,$$  with $$t_r\leq\frac{\vert \gamma(0)\vert }{K-K_d\vert B^*\varphi \vert}.$$
\end{lemma}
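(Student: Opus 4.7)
The plan is to handle existence and uniqueness first via standard Filippov theory, then establish the finite-time convergence bound through a Lyapunov-type inequality on $|\gamma|$.

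For existence, I would observe that the set-valued right-hand side $F(t,\gamma):=B^*\varphi\, d(t)-K\,\mathrm{sign}(\gamma)$ has non-empty, convex, compact values, is upper semicontinuous in $\gamma$, and (by Assumption~\ref{assump2}) is measurable and essentially bounded in $t$. Together with the linear (hence sub-linear) growth in $\gamma$ — in fact $F$ is uniformly bounded — standard Filippov existence theorems (see~\cite[Chap.~2]{filippov2013differential}) yield an absolutely continuous solution defined on the whole of $\R_+$. For uniqueness I would exploit the monotonicity of $\mathrm{sign}$: for two Filippov solutions $\gamma_1,\gamma_2$ with selections $\eta_i(t)\in -K\,\mathrm{sign}(\gamma_i(t))$, the inequality $(\gamma_1-\gamma_2)(\eta_1-\eta_2)\le 0$ implies
\begin{equation*}
\frac{d}{dt}|\gamma_1(t)-\gamma_2(t)|^2=2(\gamma_1-\gamma_2)(\eta_1-\eta_2)\le 0,
\end{equation*}
so $\gamma_1(0)=\gamma_2(0)$ forces $\gamma_1\equiv\gamma_2$.

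For the finite-time convergence, I would reuse the computation already done in \eqref{reach}: for almost every $t$ such that $\gamma(t)\neq 0$, any selection $\dot\gamma(t)\in F(t,\gamma(t))$ satisfies
\begin{equation*}
\tfrac{1}{2}\tfrac{d}{dt}|\gamma(t)|^2=\mathfrak{Re}\bigl(\bar\gamma(t)\dot\gamma(t)\bigr)\le |B^*\varphi|K_d|\gamma(t)|-K|\gamma(t)|.
\end{equation*}
Dividing by $|\gamma(t)|$, this gives $\tfrac{d}{dt}|\gamma(t)|\le -(K-K_d|B^*\varphi|)$ almost everywhere on $\{\gamma\neq 0\}$, which by Assumption~\ref{assump3} is a strictly negative constant. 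Integrating between $0$ and $t$ (while $\gamma$ has not vanished) yields $|\gamma(t)|\le |\gamma_0|-(K-K_d|B^*\varphi|)t$. Hence $\gamma(t_r)=0$ for some $t_r\le |\gamma_0|/(K-K_d|B^*\varphi|)$.

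Finally, I would argue that the zero function is itself a Filippov solution of \eqref{ODE} after time $t_r$: since $|B^*\varphi\,d(t)|\le K_d|B^*\varphi|<K$, we have $0\in B^*\varphi\,d(t)-K\,[-1,1]=F(t,0)$ for almost every $t$, so the constant $\gamma\equiv 0$ selection is admissible. By the already established uniqueness applied to the Cauchy problem starting at $\gamma(t_r)=0$, we conclude $\gamma(t)=0$ for all $t\ge t_r$.

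I expect the only genuine obstacle to be the justification of the chain-rule computation $\tfrac{d}{dt}|\gamma|^2=2\mathfrak{Re}(\bar\gamma\dot\gamma)$ at points where $\gamma$ vanishes (and the selection of the Filippov inclusion is set-valued): this is handled by the fact that $|\gamma|^2$ is absolutely continuous and differentiable almost everywhere with this derivative, so the inequality only needs to hold a.e. The rest is essentially the textbook sliding-mode argument together with a monotonicity uniqueness trick.
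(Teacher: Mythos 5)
Your proposal is correct and takes essentially the same route as the paper, whose proof is merely a citation: existence and uniqueness are attributed to general Filippov theory (\cite[Chapter 2]{filippov2013differential}, and \cite[Theorem 2.8]{wang2018generalized} for the complex case), and finite-time convergence to the Lyapunov computation \eqref{reach} of Section \ref{sec_main} — exactly the two steps you carry out in detail, with the monotonicity of $\mathrm{sign}$ playing the role of the one-sided Lipschitz condition that Filippov's uniqueness theorem requires (for $\mathbb{K}=\mathbb{C}$ one should insert $\mathfrak{Re}$ and a conjugate in your dissipativity estimate, since $s\mapsto s/\vert s\vert$ is monotone as the subdifferential of $\vert\cdot\vert$ on $\R^2$). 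Your remark that the strict positivity of $K-K_d\vert B^*\varphi\vert$ requires Assumption \ref{assump3} is accurate: the lemma lists only Assumption \ref{assump2}, but Assumption \ref{assump3} is in force in Theorem \ref{main} and is indeed needed for the stated bound on $t_r$.
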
 Lemma \ref{lem:ODE} is an immediate consequence of the general Filippov theory \cite[Chapter 2]{filippov2013differential} (for the real case), \cite[Theorem 2.8]{wang2018generalized} (for the complex case), when  applied to the particular case of \eqref{ODE}. Finite-time stability can be deduced easily by Lyapunov arguments (given in Section \ref{sec_main}).

\ \\
Let $\gamma$ be the Filippov solution of \eqref{ODE} with initial condition $\gamma(0)=\langle \varphi, z_0\rangle_H$. We consider the following system \begin{equation}
\label{system-finall}
\left\{
\begin{split}
&\frac{\d}{\d t} \phi= A_L\phi+\frac{1}{B^*\varphi} B(\dot\gamma-\lambda\gamma),\\
&\phi(0)=\phi_0\in H.
\end{split}
\right.
\end{equation}
If $B$ is an admissible operator for $\mathbb{S}$ and $\dot\gamma-\lambda\gamma \in L^2_{loc}([0,\infty);\mathbb{K})$, then  system \eqref{system-finall} admits a  unique  mild solution, where $(\mathbb{S}(t))_{t\geq 0}$  is the strongly continuous semigroup associated  with the operator $A_L$. This is what we will prove in the next Lemma, which says that there exists a unique solution in the sense of \cite[Definition 4.1.5]{tucsnak2009observation}.
\begin{lemma}
\label{lembien}
For all $ \phi_0\in H$, the system \eqref{system-finall} admits a unique mild solution $\phi\in C([0,\infty);H)\cap \mathcal{H}^1_{loc}([0,\infty);D(A^*)')$.  
\end{lemma}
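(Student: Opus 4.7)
The overall plan is to write down the candidate mild solution by the standard variation-of-constants formula associated with the semigroup $(\mathbb{S}(t))_{t\ge 0}$ and then to verify the claimed regularity by combining admissibility theory for $B$ with the regularity of the forcing term $h(t):=\frac{1}{B^*\varphi}(\dot\gamma(t)-\lambda\gamma(t))$. Concretely, I would define
$$\phi(t):=\mathbb{S}(t)\phi_0+\int_0^t \mathbb{S}(t-s)Bh(s)\,\mathrm ds,\qquad t\ge 0,$$
and show that $\phi\in C([0,\infty);H)\cap \mathcal{H}^1_{\mathrm{loc}}([0,\infty);D(A^*)')$ and that it is the unique map satisfying \eqref{system-finall} in the mild sense.

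The first step is to check that $h\in L^\infty_{\mathrm{loc}}([0,\infty);\mathbb{K})\subset L^2_{\mathrm{loc}}([0,\infty);\mathbb{K})$. By Lemma \ref{lem:ODE}, the Filippov solution $\gamma$ of \eqref{ODE} is absolutely continuous, bounded (indeed, it reaches $0$ in finite time and then stays there, so $|\gamma(t)|\le |\gamma(0)|$ for all $t\ge 0$), and from the inclusion satisfied by $\dot\gamma$ together with Assumptions \ref{assump2} and \ref{assump3} one has $|\dot\gamma(t)|\le |B^*\varphi|K_d+K$ for a.e. $t\ge 0$. Hence $\dot\gamma-\lambda\gamma$ is essentially bounded on $\mathbb R_+$, and so is $h$.

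The second step is to invoke the admissibility framework for abstract linear systems. By Assumption \ref{assump1}(ii), $B$ is admissible for $\mathbb{T}$, and since $A_L=A+BL$ is obtained from $A$ by an admissible state feedback (this is the content of Assumption \ref{assump1}(iv) read together with the Weiss--Salamon framework, see e.g. \cite[Section 7.1]{tucsnak2009observation}), the operator $B$ is also admissible for the perturbed semigroup $\mathbb{S}$. Combining this with $h\in L^2_{\mathrm{loc}}([0,\infty);\mathbb{K})$ gives that $\int_0^{(\cdot)}\mathbb{S}(\cdot-s)Bh(s)\,\mathrm ds$ belongs to $C([0,\infty);H)$, and then so does $\phi$.

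The third step is to obtain $\phi\in\mathcal{H}^1_{\mathrm{loc}}([0,\infty);D(A^*)')$. Working in the larger space $D(A^*)'$, where $A_L$ extends to a bounded operator $H\to D(A^*)'$ and $B\in\mathcal{L}(\mathbb{K},D(A^*)')$, one can differentiate the variation-of-constants formula and obtain $\dot\phi=A_L\phi+Bh$ in $D(A^*)'$. Since $\phi\in C([0,\infty);H)$ implies $A_L\phi\in C([0,\infty);D(A^*)')$, and since $Bh\in L^2_{\mathrm{loc}}([0,\infty);D(A^*)')$, one deduces $\dot\phi\in L^2_{\mathrm{loc}}([0,\infty);D(A^*)')$, i.e. the required Sobolev regularity. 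Uniqueness is then immediate: subtracting two mild solutions with the same initial data and the same forcing yields $\mathbb{S}(t)\cdot 0=0$.

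The main obstacle I anticipate is justifying that $B$ remains admissible for the closed-loop semigroup $\mathbb{S}$ generated by $A_L$; the rest is a standard application of the mild-solution theory for infinite-dimensional systems with unbounded control operators. If this admissibility is not available off the shelf from Assumption \ref{assump1}(iv), one would need to inspect how $L$ is assumed to act on $D(L)$ and invoke the Weiss perturbation theorem to transfer admissibility from $\mathbb{T}$ to $\mathbb{S}$.
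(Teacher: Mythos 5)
Your proposal is correct and follows essentially the same route as the paper: establish that the forcing term $\frac{1}{B^*\varphi}(\dot\gamma-\lambda\gamma)$ is in $L^2_{loc}$ using the boundedness of $\dot\gamma$ from the Filippov inclusion, transfer admissibility of $B$ from $(\mathbb{T}(t))_{t\geq0}$ to the closed-loop semigroup $(\mathbb{S}(t))_{t\geq0}$ (the paper does this via \cite[Proposition 4.2]{hansen1997new}, which is exactly the perturbation result you anticipated needing), and then invoke the standard mild-solution theory \cite[Proposition 4.2.5]{tucsnak2009observation}, whose content you essentially re-derive in your third step.
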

\begin{proof}
Let $\gamma$ be a Filippov solution of \eqref{ODE}. Then, according to Lemma \ref{lem:ODE}, $\gamma$ is absolutely continuous. Moreover, $\dot\gamma$ is 
bounded and measurable according to Assumption \ref{assump2}. Thus, we have  $\dot\gamma-\lambda\gamma \in L^2_{loc}([0,\infty);\mathbb{K})$. On the other hand, according to the item (ii) of Assumption \ref{assump1}, $B$ is admissible for $(\mathbb{T}(t))_{t\geq 0}$, then according to \cite[Proposition 4.2]{hansen1997new}, $B$ is an admissible control operator for $(\mathbb{S}(t))_{t\geq 0}$. Then, according to \cite[Proposition 4.2.5]{tucsnak2009observation}, the statement of Lemma \ref{lembien}  holds, achieving the proof.
\end{proof}
Now, the  aim is to prove that the mild solution $\phi$  to \eqref{system-finall} with initial condition $z_0$ is a mild solution to \eqref{system-final}. To that end, we will show that  the following  function \begin{equation} \label{def:y}y(t)=\langle \varphi, \phi(t)\rangle_H,\end{equation} with  $\phi$ the solution of \eqref{system-finall}, is equal to $\gamma$, for any $t>0$.
 \begin{lemma} \label{Caratheodry}
For all $ z_0\in H$, $y$ is a Carath\'eodory solution to 
\begin{equation} \label{EDOcool} \left\{\begin{array}{l}
\dot{y}(t)=\lambda y +\dot \gamma(t)-\lambda \gamma(t), \quad \mbox{for a.e } t\geq 0,  \\
y(0)=\langle \varphi, z_0\rangle_H
\end{array}\right.
\end{equation} i.e $y$ is an absolutely continuous map such that, for all $t\geq 0$  \begin{equation}
y(t) -y(0)= \int_0^t\big(\lambda y(s)+\dot\gamma(s)-\lambda\gamma(s)\big)ds.
\end{equation}
\end{lemma}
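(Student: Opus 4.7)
The plan is to push the inner product with $\varphi$ through the mild solution formula for $\phi$ and exploit that $\varphi$ is an eigenfunction of $A_L^*$ with eigenvalue $\lambda$, so that $\mathbb{S}^*(t)\varphi = e^{\lambda t}\varphi$. This will convert the abstract variation-of-constants formula for $\phi$ into a scalar variation-of-constants formula for $y$, from which the ODE follows.

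In more detail, I would first write, thanks to Lemma \ref{lembien}, the mild solution of \eqref{system-finall} as
\begin{equation*}
\phi(t) = \mathbb{S}(t)z_0 + \int_0^t \mathbb{S}(t-s)\,\frac{1}{B^*\varphi}\,B\bigl(\dot\gamma(s)-\lambda\gamma(s)\bigr)\,ds.
\end{equation*}
Since $\varphi \in D(A_L^*) \subset D(A^*)$, the pairing $\langle \varphi, \cdot\rangle_H$ makes sense against elements of $D(A^*)'$, and one can pass it inside the Bochner integral in the appropriate duality $\langle \cdot,\cdot\rangle_{D(A^*),D(A^*)'}$. From $A_L^*\varphi=\lambda\varphi$ and the fact that $\mathbb{S}(t)$ is generated by $A_L$, one has $\mathbb{S}^*(t)\varphi = e^{\lambda t}\varphi$, which yields $\langle \varphi, \mathbb{S}(t)z_0\rangle_H = e^{\lambda t}\langle \varphi, z_0\rangle_H$ and, by the definition of $B^*$,
\begin{equation*}
\Bigl\langle \varphi, \mathbb{S}(t-s)\,\tfrac{1}{B^*\varphi}\,B v\Bigr\rangle_H = e^{\lambda(t-s)}\,v \qquad \text{for every } v\in\mathbb{K}.
\end{equation*}
Applied to $v=\dot\gamma(s)-\lambda\gamma(s)$, this gives the scalar representation
\begin{equation*}
y(t) = e^{\lambda t}\langle \varphi, z_0\rangle_H + \int_0^t e^{\lambda(t-s)}\bigl(\dot\gamma(s)-\lambda\gamma(s)\bigr)\,ds.
\end{equation*}

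Next, I would read off that $y$ is absolutely continuous: the first term is smooth in $t$, and the integrand belongs to $L^2_{\mathrm{loc}}([0,\infty);\mathbb{K})$ (in fact $L^\infty_{\mathrm{loc}}$, since $\gamma$ is absolutely continuous with essentially bounded $\dot\gamma$ by Lemma \ref{lem:ODE} and Assumption \ref{assump2}). Differentiating under the integral sign, valid almost everywhere for such integrands, produces $\dot y(t) = \lambda y(t) + \dot\gamma(t) - \lambda\gamma(t)$ for a.e.\ $t \geq 0$, together with the initial condition $y(0)=\langle \varphi, z_0\rangle_H$, which is precisely the Carathéodory identity claimed in \eqref{EDOcool}.

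The main obstacle I anticipate is the rigorous justification of the duality manipulation $\langle \varphi, \mathbb{S}(t-s)B v\rangle_H = e^{\lambda(t-s)} B^*\varphi\, v$, because $B$ is unbounded and only takes values in $D(A^*)'$, and $\mathbb{S}(t)$ has to be understood on the extrapolation space. This is handled by invoking admissibility of $B$ for $(\mathbb{S}(t))_{t\geq0}$ (established in Lemma \ref{lembien} via \cite[Proposition 4.2]{hansen1997new}), extending $\mathbb{S}(t)$ to $D(A^*)'$, and using the identity $\mathbb{S}^*(t)\varphi = e^{\lambda t}\varphi$ on $D(A_L^*)$, so that all pairings remain in the well-defined duality $\langle\cdot,\cdot\rangle_{D(A^*),D(A^*)'}$. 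Once this is secured, the remainder of the argument is a routine application of Fubini and the Lebesgue differentiation theorem.
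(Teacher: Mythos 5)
Your argument is correct, but it routes through a different key fact than the paper does. The paper's proof is a two-line appeal to the weak (integrated) characterization of mild solutions from \cite[Remark 4.2.6]{tucsnak2009observation}: for $\varphi\in D(A_L^*)$ one has $\langle \varphi, \phi(t)-z_0\rangle_H=\int_0^t\big(\langle A_L^*\varphi,\phi(s)\rangle_H+\tfrac{1}{B^*\varphi}B^*\varphi(\dot\gamma(s)-\lambda\gamma(s))\big)ds$, and then $A_L^*\varphi=\lambda\varphi$ immediately gives the Carath\'eodory identity. You instead pair $\varphi$ with the Duhamel formula and exploit $\mathbb{S}^*(t)\varphi=e^{\lambda t}\varphi$, which yields the explicit scalar representation $y(t)=e^{\lambda t}y(0)+\int_0^t e^{\lambda(t-s)}(\dot\gamma(s)-\lambda\gamma(s))ds$ before differentiating. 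The two are equivalent in substance; what yours buys is a closed-form expression for $y$ and a proof that only needs the adjoint-semigroup eigenvalue identity plus Fubini rather than the full weak formulation, at the cost of having to justify by hand the duality swap $\langle\varphi,\mathbb{S}(t-s)Bv\rangle_H=e^{\lambda(t-s)}B^*\varphi\,v$ on the extrapolation space --- which is exactly the content the cited remark packages for you, and which you correctly identify and handle via admissibility. One small caution: in the complex case $\langle \mathbb{S}^*(t)\varphi,z_0\rangle_H$ produces $\overline{e^{\lambda t}}$ under the usual conjugate-linear convention for the first slot, so your identity implicitly adopts the same convention the paper uses when it writes $\langle A_L^*\varphi,z\rangle_H=\lambda\langle\varphi,z\rangle_H$; this is consistent with the paper but worth stating.
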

\begin{proof}
Let $\phi$ be the mild solution of \eqref{system-finall}. Since $\varphi \in D(A^*_L)$, and using Item (ii) of Assumption \ref{assump1}, then according to \cite[Remark 4.2.6]{tucsnak2009observation}, we obtain for that, every $t\geq 0$,
\begin{align}\label{def:dot_y}
\notag
     \langle \varphi, \phi(t) -z_0\rangle_H&=\int_0^t\bigg( \langle A_L^*\varphi,\phi(s)\rangle_H+\frac{1}{B^*\varphi}B^*\varphi(\dot\gamma(s)\\\notag&-\lambda\gamma(s))\bigg)ds\\
    &=\int_0^t\bigg( \lambda\langle \varphi,\phi(s)\rangle_H+\dot\gamma(s)-\lambda\gamma(s)\bigg)ds,
\end{align} because $A_L^*\varphi=\lambda\varphi$. Then, using \eqref{def:y}, one has, for all $t\geq 0$,
\begin{equation}
y(t) -y(0)= \int_0^t\big(\lambda y(s)+\dot\gamma(s)-\lambda\gamma(s)\big)ds.
\end{equation}
This concludes the proof.
\end{proof}
We introduce the function  $g$ defined by $g(t)=y(t)-\gamma(t)$. From \eqref{ODE} and \eqref{EDOcool} with $\gamma(0)=\langle \varphi, z_0\rangle_H$, $g$ is solution of 
\begin{equation} \label{EDOsimple}
\left\{\begin{array}{l}
\dot g(t)=\lambda g(t)\\
g(0)=0
\end{array}\right.
\end{equation} 
Thus, for any $t\in \R$, $g(t)=0$. By definition of $g$, we deduce that, for any $t\in \R$, $y(t)=\gamma(t)$. Therefore, according to \eqref{ODE} we have, for a.e $t\geq 0$, \begin{equation}\label{inclusion}
   \frac{1}{B^*\varphi} \dot \gamma(t) \in -\frac{K}{B^*\varphi}\mathrm{sign}(y(t)) + d(t).
\end{equation}
Thus, according to Lemma \ref{lembien} and \eqref{inclusion}, $\phi$ satisfies   Definition \ref{def:solution}. Then, we conclude that, for any Filippov solution $\gamma$ of \eqref{ODE} with initial condition $\gamma(0)=\langle \varphi, z_0\rangle_H$, the associated mild solution $\phi$ of \eqref{system-finall} is a  mild solution of \eqref{system-final}. This concludes the proof of Theorem \ref{main} in the case of system \eqref{system-final}.  \hfill$\Box$
\subsubsection{Super twisting control}

Let $z_0\in H$, $w_0\in\R$. Consider the following ODE

\begin{equation} \label{ODE1}
\left\{
\begin{aligned}
&\dot \rho (t)= -\alpha\vert \rho(t)\vert^{\frac{1}{2}}\mathrm{sign}(\rho(t))+\eta(t), &t\in\R_+, \\
&\dot\eta(t) \in  B^*\varphi \dot{d}(t) -\beta\mathrm{sign}(\rho(t)), & t\in \R_+\\
& \rho (0)=\rho_0, \eta(0)=w_0. \\
\end{aligned}
\right.
\end{equation}

The system  \eqref{ODE1} is understood in the sense of Filippov \cite{filippov2013differential}. In the next lemma, we state that there exists a solution to \eqref{ODE1}. 
\begin{lemma}
Assume that \eqref{parametre}  holds. Then, there exists an absolutely continuous map  $(\rho,\eta)$  that satisfies \eqref{ODE1} for almost every $t\geq 0$.  
\label{edosolution}
\end{lemma}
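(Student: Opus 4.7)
\bigskip

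The plan is to reformulate \eqref{ODE1} as a planar differential inclusion and then invoke a standard existence theorem. Set
\begin{equation*}
F(t,\rho,\eta):=\bigl\{\bigl(-\alpha|\rho|^{1/2}\mathrm{sign}(\rho)+\eta,\; B^*\varphi\,\dot d(t)-\beta s\bigr)\;:\;s\in\mathrm{sign}(\rho)\bigr\},
\end{equation*}
so that \eqref{ODE1} reads $(\dot\rho(t),\dot\eta(t))\in F(t,\rho(t),\eta(t))$ with $(\rho(0),\eta(0))=(\rho_0,w_0)$. Observe that the first component is in fact single-valued and continuous in $\rho$: although $\mathrm{sign}(\rho)$ is set-valued at $\rho=0$, the factor $|\rho|^{1/2}$ vanishes there, so $\rho\mapsto -\alpha|\rho|^{1/2}\mathrm{sign}(\rho)$ extends continuously by $0$ at the origin. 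The only genuine set-valuedness lies in the $-\beta\mathrm{sign}(\rho)$ term of the second equation.

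The first step is to verify the standard Filippov hypotheses. In the state variable $(\rho,\eta)$, $F(t,\cdot,\cdot)$ is nonempty, convex, compact-valued, and upper semi-continuous (these are the classical properties of the Filippov regularization of $\mathrm{sign}$). In the time variable $t$, the map $F(\cdot,\rho,\eta)$ is measurable because, by Assumption \ref{assump4}, $\dot d\in L^\infty_{loc}(\R_+)$. A Carath\'eodory--Filippov existence theorem (see e.g.\ \cite[Chapter~2]{filippov2013differential}) then yields a local absolutely continuous solution $(\rho,\eta):[0,\tau)\to\R^2$ starting from $(\rho_0,w_0)$.

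The second step is to extend the local solution to all of $\R_+$. Since $|\mathrm{sign}(\cdot)|\leq 1$, $|\dot d(t)|\leq C$ a.e., and $|\rho|^{1/2}\leq 1+|\rho|$, every selection $(f_1,f_2)\in F(t,\rho,\eta)$ satisfies a linear growth bound
\begin{equation*}
|f_1|+|f_2|\leq c_1+c_2\bigl(|\rho|+|\eta|\bigr),
\end{equation*}
for some constants $c_1,c_2>0$ independent of $t$. A Gronwall argument on $|\rho(t)|+|\eta(t)|$ therefore rules out finite-time blow-up, so the maximal solution is defined on all of $[0,\infty)$.

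The main obstacle, and the only delicate point, is to invoke the correct existence theorem: because the $t$-dependence enters through the merely bounded measurable function $\dot d$, one cannot appeal to the version of Filippov's theorem that requires continuity in $(t,\rho,\eta)$; instead one needs the Carath\'eodory-type statement for upper semi-continuous differential inclusions with measurable time dependence. Once this is in place, the rest of the argument is routine.
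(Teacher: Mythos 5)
Your proof is correct, but it takes a genuinely different route from the paper's. The paper removes the time dependence altogether: it absorbs the term $B^*\varphi\,\dot d(t)$ into the closed ball $\bar B(0,\vert B^*\varphi\vert C)$, forms the (autonomous) Filippov regularization $F_d(\rho,\eta)$ of the resulting state-dependent right-hand side, checks that $F_d$ is nonempty, compact, convex valued and upper semi-continuous, and invokes an existence theorem for autonomous upper semi-continuous inclusions (\cite[Theorem 3.6]{bernuau2014homogeneity}). You instead keep the time dependence explicit, verify the Carath\'eodory--Filippov hypotheses (measurable in $t$, upper semi-continuous with convex compact values in the state, with the correct observation that $-\alpha\vert\rho\vert^{1/2}\mathrm{sign}(\rho)$ is actually single-valued and continuous so the only genuine set-valuedness is in $-\beta\,\mathrm{sign}(\rho)$), and then add a linear-growth/Gronwall continuation argument. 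Your version buys two things: (i) the solution you construct satisfies the inclusion with the \emph{actual} function $\dot d(t)$, whereas a solution of the paper's enlarged inclusion a priori only satisfies $\dot\eta(t)\in \bar B(0,\vert B^*\varphi\vert C)-\beta\,\mathrm{sign}(\rho(t))$, so the paper's closing identification of such a solution with a solution of \eqref{ODE1} is the one loose step that your argument sidesteps; (ii) you treat global-in-time existence explicitly, which the paper leaves implicit. What the paper's route buys is brevity: autonomy lets it avoid the measurable-in-time version of the existence theorem entirely.
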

\begin{proof}  We  consider the function $f:\R^2\to \R^2$ defined by \begin{equation}
    f(\rho,\eta)=\left\{\begin{array}{ll}
\, f^+(\rho,\eta)=(-\alpha\sqrt{\rho}+\eta,-\beta)  & \text{if } \rho>0,  \\

\,  f^-(\rho,\eta)=(\alpha\sqrt{-\rho}+\eta,\beta)& \text{if } \rho<0 \\
\end{array}\right.
\end{equation}and let $F_d:(\rho,\eta)\in \R^2\mapsto F_d(\rho,\eta)$ be the set-valued map defined by\begin{align}\notag
    F_d(\rho,\eta)&=\bar B(0,\vert B^*\varphi \vert C)\\&+\left\{\begin{array}{ll}
\, \{f(\rho,\eta)\}  & \text{if } \rho\neq 0,  \\
\, \overline{\text{conv}}\{f^+(\rho,\eta),f^-(\rho,\eta)\} & \text{if } \rho=0 
\end{array}\right.
\end{align}
where $\bar B(0,\vert B^*\varphi \vert C)$ is a closed ball of $\R^2$ centered at $0$ and of radius $\vert B^*\varphi \vert C$.
 Since $f$ is continuous on $\R\setminus\{0\}\times\R$, then the function $F_d$ is non$-$empty, compact, convex and upper semi$-$continuous. Then according to \cite[Theorem 3.6]{bernuau2014homogeneity}, there exists at least one solution of the differential inclusion \begin{equation}
    \dot \zeta\in F_d(\zeta)
\end{equation} where $\zeta=(\rho,\eta)$. 
Since $F_d$ is the Filippov’s construction (as in \cite[Chapter 2]{filippov2013differential})  associated with \eqref{ODE1}, then, there exists an absolutely continuous map that satisfies \eqref{ODE1} for almost every $t\geq 0$, concluding therefore the proof.
\end{proof}

Let  $(\rho,\eta)$ be a solution of \eqref{ODE1} with initial condition $\rho(0)=\langle \varphi, z_0\rangle_H$. We consider the following system \begin{equation}
\label{system-finall1}
\left\{
\begin{split}
&\frac{\d}{\d t} \psi= A_L\psi+\frac{1}{B^*\varphi} B(\dot\rho-\lambda\rho),\\
&\psi(0)=z_0\in H.
\end{split}
\right.
\end{equation}
Since $\rho$ and $\eta$ are continuous then, according to the first line of \eqref{ODE1}, we  deduce that $\dot \rho$ is also continuous. Moreover, since $\rho$ and $\dot\rho$ are continuous,  then $\dot\rho-\lambda\rho \in  L^2_{loc}([0,\infty);\R)$. Thus, according to Lemma \ref{lembien}, the system \eqref{system-finall1} admits a unique mild solution $\psi\in C([0,\infty);H)\cap \mathcal{H}^1_{loc}([0,\infty);D(A^*)')~$. 

 As in the previous case, the  aim is now to prove that the solution $(\psi,\eta)$ is a mild solution  to \eqref{system-final_super_twisting}.
 For this purpose, we are going to show that the following function \begin{equation} \label{def:theta}\mathcal{\theta}(t)=\langle \varphi, \psi(t)\rangle_H,\end{equation} with  $\psi$ the solution of \eqref{system-finall1}, is equal to $\rho$ for any $t>0$.
 \begin{lemma} \label{Caratheodry1}
For all $ z_0\in H$, $\theta$ is a Carath\'eodory solution of 
\begin{equation} \label{EDOcool1} \left\{\begin{array}{l}
\dot{\theta}(t)=\lambda \theta +\dot \rho(t)-\lambda \rho(t), \quad t\geq 0,  \\
\theta(0)=\langle \varphi, z_0\rangle_H.
\end{array}\right.
\end{equation}
\end{lemma}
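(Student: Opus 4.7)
The plan is to adapt the proof of Lemma \ref{Caratheodry} essentially verbatim, with $(\rho,\psi)$ playing the role of $(\gamma,\phi)$, since \eqref{system-finall1} has exactly the same structure as \eqref{system-finall} after relabeling. The three ingredients needed are: regularity of the forcing $\dot\rho-\lambda\rho$ so that Lemma \ref{lembien} can be invoked on \eqref{system-finall1}; admissibility of $B$ for $(\mathbb{S}(t))_{t\geq 0}$ so that the duality formula from \cite[Remark 4.2.6]{tucsnak2009observation} applies to the pairing $\langle \varphi,\psi(t)\rangle_H$; and the eigenrelation $A_L^*\varphi = \lambda\varphi$ so that the integrand collapses to the desired right-hand side.

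First, I would collect the regularity already established in the discussion preceding the lemma. From Lemma \ref{edosolution}, $(\rho,\eta)$ is absolutely continuous. The first line of \eqref{ODE1} expresses $\dot\rho$ as a continuous function of $\rho$ and $\eta$, hence $\dot\rho$ is itself continuous; in particular $\dot\rho-\lambda\rho\in L^2_{loc}([0,\infty);\mathbb{R})$. By Lemma \ref{lembien}, the mild solution $\psi\in C([0,\infty);H)\cap \mathcal{H}^1_{loc}([0,\infty);D(A^*)')$ of \eqref{system-finall1} exists and is unique, and admissibility of $B$ for $(\mathbb{S}(t))_{t\geq 0}$ has been verified there via \cite[Proposition 4.2]{hansen1997new}.

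Next, since $\varphi\in D(A_L^*)$ and $B$ is admissible for $(\mathbb{S}(t))_{t\geq 0}$, \cite[Remark 4.2.6]{tucsnak2009observation} applies to $\psi$ and yields, for every $t\geq 0$,
\begin{equation*}
\langle \varphi,\psi(t)-z_0\rangle_H = \int_0^t\Bigl(\langle A_L^*\varphi,\psi(s)\rangle_H + \tfrac{1}{B^*\varphi}B^*\varphi\bigl(\dot\rho(s)-\lambda\rho(s)\bigr)\Bigr)ds.
\end{equation*}
Using $A_L^*\varphi = \lambda\varphi$ and the definition \eqref{def:theta} of $\theta$, this becomes
\begin{equation*}
\theta(t)-\theta(0) = \int_0^t\bigl(\lambda\theta(s)+\dot\rho(s)-\lambda\rho(s)\bigr)\,ds,
\end{equation*}
with $\theta(0)=\langle\varphi,z_0\rangle_H$. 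Since the integrand is locally integrable, $\theta$ is absolutely continuous and satisfies \eqref{EDOcool1} almost everywhere, which is the claimed Carath\'eodory property.

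There is essentially no serious obstacle: the calculation is mechanically identical to the one carried out in \eqref{def:dot_y}. The only mild point worth noting is that here the regularity of $\dot\rho$ is even better than in the sliding-mode case, because Assumption \ref{assump4} and the structure of \eqref{ODE1} give continuity of $\dot\rho$, whereas in Lemma \ref{Caratheodry} only $L^\infty$-boundedness of $\dot\gamma$ was available; this only strengthens the integrability required to invoke \cite[Remark 4.2.6]{tucsnak2009observation}.
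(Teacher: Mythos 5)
Your proof is correct and takes essentially the same approach as the paper: the authors omit this proof, stating only that it is ``similar to the proof of Lemma~\ref{Caratheodry}'', and your write-up is precisely that adaptation (regularity of $\dot\rho-\lambda\rho$ from Lemma~\ref{edosolution} and the first line of \eqref{ODE1}, admissibility of $B$ for $(\mathbb{S}(t))_{t\geq0}$ via Lemma~\ref{lembien}, the duality formula of \cite[Remark 4.2.6]{tucsnak2009observation}, and the eigenrelation $A_L^*\varphi=\lambda\varphi$).
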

The proof of Lemma \ref{Caratheodry1} is similar to the proof of Lemma \ref{Caratheodry}. We therefore omit the proof of Lemma \ref{Caratheodry1}.

Now, according to Lemma \ref{edosolution} $\theta$ is absolutely continuous map. Moreover, if we  set $\kappa=\theta-\rho$, then $\kappa$ satisfies \eqref{EDOsimple}. Thus, $\kappa (t)=0$ for all $t\in \R$. This mean that,  $\theta(t)=\rho(t)$ for any $t\in \R$. As a consequence, according to \eqref{ODE1}, we have, for a.e $t\geq 0$, \begin{equation}\label{inclusion_eta}
  \dot\eta(t) \in  B^*\varphi \dot{d}(t) -\beta\mathrm{sign}(\theta(t)).
\end{equation}
Thus, according to Lemma \ref{edosolution} and \eqref{inclusion_eta}, $\eta$ is absolutely continuous map and satisfies \eqref{w_inclusion}. Therefore, $(\psi,\eta)$ satisfies  the  Definition \ref{def:solution1}. This, mean that $(\psi,\eta)$ is a mild solution of \eqref{system-final_super_twisting}. This concludes the proof of Theorem \ref{main} in the case of system \eqref{system-final_super_twisting}.

\hfill$\Box$

\subsection{Proof of Theorem \ref{Global asymptotic}}
Like the proof of Theorem \ref{main}, the proof of Theorem \ref{Global asymptotic} is divided into two parts. In the first part, the proof of the  Theorem \ref{Global asymptotic} is presented in the case of the system \eqref{system-final}. The second part deals with the proof of Theorem \ref{Global asymptotic} in the case of system \eqref{system-final_super_twisting}.

Let us start the proof of the first part.
\subsubsection{Sliding-mode control}
Let us consider $z$ a mild solution of  \eqref{system-final} with initial condition $z_0\in H$. Then, according Definition \ref{def:solution}, there exists $h\in L^2_{loc}([0,\infty);\mathbb{K})$ such that  $h$ satisfies \eqref{v_inclusion} and $z$ satisfies \eqref{mildsolu_abstrait}.
Therefore, since $\varphi \in D(A^*_L)$, and using Item (ii) of Assumption \ref{assump1}, then according to \cite[Remark 4.2.6]{tucsnak2009observation}, $z$ satisfies, for  every $t\geq 0$,
\begin{align}
\notag
     \langle \varphi, z(t) -z_0\rangle_H&=\int_0^t\bigg( \langle A_L^*\varphi,z(s)\rangle_H+B^*\varphi h(s)\bigg)ds\\
    &=\int_0^t\bigg( \lambda\langle \varphi,z(s)\rangle_H+B^*\varphi h(s)\bigg)ds, 
\end{align} because $A_L^*\varphi=\lambda\varphi$.
Using \eqref{sigma}, one has, for every $t\geq 0$,
\begin{equation} \label{sigma_cathe}
\sigma(t) -\sigma(0)= \int_0^t\big(\lambda \sigma(s)+B^*\varphi h(s)\big)ds.
\end{equation}
As a consequence, $\sigma$ defined in \eqref{sigma} is a Carath\'eodory solution to  
\begin{equation} \label{ODE_sigma}
\left\{
\begin{aligned}
&\dot \sigma (t)= \lambda \sigma(t)+B^*\varphi h(t), \\
& \sigma (0)=   \langle \varphi, z_0\rangle_H. \\
\end{aligned}
\right.
\end{equation}
Since $h\in -\frac{1}{B^*\varphi}\bigg(\lambda\sigma+ K\mathrm{sign}(\sigma)\bigg)+d$, then  $\sigma$ is a Filippov solution of \eqref{ODE} with initial condition $ \langle \varphi, z_0\rangle_H$. From Lemma \ref{lem:ODE}, there exists a finite time $t_r$  such that 
 $$  \sigma(t)=0 \text{ for any } t> t_r.$$ Therefore, $\dot\sigma(t)=0 \text{ for any } t> t_r$. As a consequence, from \eqref{ODE_sigma}, for any $t>t_r$, $h(t)=0$. Thus, for any $t>t_r$, the system \eqref{system-final} is equivalent to the system \eqref{system-stable}  and hence is asymptotically stable in $H$ from the item (iv) of Assumption \ref{assump1}. Therefore, to conclude the proof of Theorem \ref{Global asymptotic} in the case of system \eqref{system-final}, it is just necessary to prove the Lyapunov stability of the system \eqref{system-final} over the time interval $[0,t_r]$. For this purpose, we consider $z$ a mild solution of  \eqref{system-final} with initial condition $z_0\in H$ on the interval $[0,t_r]$. Then, using the Definition \ref{def:solution}, there exists $C_0>0$ such that, for all $t\in[0,t_r]$, we have 
 \begin{equation}\label{inegalite_stabilite}\Vert z(t) \Vert_{H}\leq C_0\Vert z_0 \Vert_{H}+
    \left\Vert \int^t_0 \mathbb{S}(t-s)Bh(s)ds\right\Vert_{H}.
\end{equation}
Since $(\mathbb{S}(t))_{t\geq0}$ is exponentially stable and $B$ is an admissible operator for $(\mathbb{S}(t))_{t\geq0}$, then according to \cite[Proposition 4.3.3]{tucsnak2009observation},  there exists $C_1>0$ independent of $t_r$  such that, for all $t\in[0,t_r]$
\begin{equation}\label{inegalite_stabilite_1}
\Vert z(t) \Vert_{H}\leq C_1\bigg(\Vert z_0 \Vert_{H}+
    \Vert h\Vert_{L^2(0,t_r)}\bigg).
\end{equation}Moreover, since $h\in -\frac{1}{B^*\varphi}\big(\lambda\sigma+ K\mathrm{sign}(\sigma)\big)+d$, then according to Assumption \ref{assump2}, $h$ is bounded. Therefore, there exists $C_2>0$ such that
\begin{equation}\label{v_borne}
     \Vert h\Vert_{L^2(0,t_r)}\leq C_2 t_r^{\frac{1}{2}}.
\end{equation}Moreover, according to Lemma\ref{lem:ODE}, $t_r\leq\frac{\vert \langle \varphi, z_0\rangle_H\vert }{K-\vert B^*\varphi \vert\Vert d\Vert_{L^\infty(\mathbb{R}_+)}}$. Thus, using Cauchy-Schwarz's inequality, we have
\begin{equation}\label{tr}
    t_r\leq\frac{\Vert\varphi\Vert_H}{K-\vert B^*\varphi \vert\Vert d\Vert_{L^\infty(\mathbb{R}_+)}} \Vert z_0\Vert_H.
\end{equation}As a consequence, according to \eqref{inegalite_stabilite_1}, \eqref{v_borne} and \eqref{tr}, there exists $C_3>0$ (independent of $t_r$) such that for all $t\in [0,t_r]$,
\begin{equation}\label{inegalite_stabilite_2}
\Vert z(t) \Vert_{H}\leq C_3\bigg(\Vert z_0 \Vert_{H}+
   \sqrt{\Vert z_0 \Vert_{H}}\bigg).
\end{equation}According to \cite[Definition 2.3]{prieur}, this concludes the proof of Lyapunov stability of the system \eqref{system-final} over the time interval $[0,t_r]$. \hfill $\Box$

\begin{remark}
In contrast with many stabilization techniques, we do not need here to compute time-derivative of Lyapunov functionals for the infinite-dimensional system. More precisely, classical techniques rely on the existence of strong solutions for which on computes time derivative of a suitable Lyapunov functional, and one concludes then on the stability for weak solution by a density argument. 
\end{remark}

\subsubsection{Super-twisting control}
Let us consider $(z,w)$ a mild solution of  \eqref{system-final_super_twisting} with initial condition $(z_0,w_0)\in H\times \R$. Then, according Definition \ref{def:solution1}, there exists $\tilde{w}\in L^1_{loc}([0,\infty);\R)$ with $\tilde{w}(t)\in\mathrm{sign}(\sigma(t))$  such that, for a.e $t\geq 0$,   $\dot w(t)=B^*\varphi d(t)-\beta\tilde{w}$ and $z$ satisfies \eqref{mildsolu_abstrait_twisting}. Replacing $h$ by $\omega$ in \eqref{sigma_cathe}, then $\sigma$ satisties \eqref{sigma_cathe}.
Then, according to  \eqref{omega}, \eqref{sigma_cathe} we obtain, for a.e $t\in [0,T]$
\begin{equation}\label{preuve_systemS}
\left\{
\begin{aligned}
&\dot{\sigma}(t) =-\alpha\vert \sigma(t)\vert^{\frac{1}{2}}\mathrm{sign}(\sigma(t))+w(t),\\
&\dot{w}(t) =B^*\varphi\dot{d}(t) -\beta \tilde{w}(t).
\end{aligned}
\right.
\end{equation}Since $\tilde{w}\in \mathrm{sign}(\sigma(t)$, then $(\sigma,w)$ is a Filippov solution of \eqref{ODE1} with initial condition $\left( \langle \varphi, z_0\rangle_H, w_0\right)$. According to Proposition \ref{super_twisting_proposition}, there exists a finite time such that  $$\sigma(t)=0\mbox{ and } w(t)=0$$ for any $t>t_r$. Then, for any $t>t_r$, the solution $z$ to  system \eqref{system-final_super_twisting} is solution to system \eqref{system-stable}  and hence is asymptotically stable in $H$ from Item (iv) of Assumption \ref{assump1}. Therefore, as in the previous part, we just need  to prove the Lyapunov stability of the system \eqref{system-final_super_twisting} over the time interval $[0,t_r]$ to conclude the proof of Theorem \ref{Global asymptotic}.
For this purpose, we consider  $(z,w)$ a mild solution of  \eqref{system-final_super_twisting} with initial condition $(z_0,w_0)\in H\times \R$ on the interval $[0,t_r]$. Then, like in the previous part, using  Definition \ref{def:solution1}, there exists $C_0>0$ such that, for all $t\in[0,t_r]$, we have 
\begin{equation}\label{inegalite_stabilite_twisting}
\Vert z(t) \Vert_{H}\leq C_0\bigg(\Vert z_0 \Vert_{H}+
    \Vert \omega\Vert_{L^2(0,t_r)}\bigg).
\end{equation}
Since  $z$ is  continuous on $[0,t_r]$ , then, according to \eqref{sigma}, $\sigma$ is also continuous. Therefore, $\sigma$ is bounded  on $[0,t_r]$. Moreover, $w$ is an absolutely continuous map. Thus, $w$ is bounded on $[0,t_r]$. Then, the function $$ \omega(\cdot):= \frac{1}{B^*\varphi}\bigg(-\lambda\sigma(\cdot) -\alpha\vert\sigma(\cdot)\vert^{\frac{1}{2}} \mathrm{sign}(\sigma(\cdot))+w(\cdot)\bigg)$$ is also bounded on $[0,t_r]$. Therefore, there exists $C_1>0$ such that
\begin{equation}\label{omega_borne}
     \Vert \omega\Vert_{L^2(0,t_r)}\leq C_1 t_r^{\frac{1}{2}}.
\end{equation}
Now, according to \cite[Theorem 2]{moreno2012strict}, there exist positive constants $C_2$, $C_3$  such that
\begin{equation} \label{tr} \left\{\begin{array}{l}
t_r<C_2\left(\vert \sigma(0)\vert +\vert w_0 \vert\right), \\
\vert w(t)\vert\leq C_3 \vert w_0\vert.
\end{array}\right.
\end{equation} Using  Cauchy-Schwarz's inequality, we obtain
\begin{equation} \label{sigma0}
    \vert \sigma(0)\vert=\vert \langle \varphi, z_0\rangle_H\vert\leq \Vert \varphi\Vert_H\Vert z_0\Vert_H.
\end{equation}
As a consequence, according to \eqref{inegalite_stabilite_twisting}, \eqref{omega_borne}, \eqref{tr} and \eqref{sigma0}, there exists $C_4>0$  such that, for all $t\in [0,t_r]$,
\begin{equation}\label{inegalite_stabilite_twisting_1}
\Vert z(t) \Vert_{H}+\vert w(t)\vert\leq C_4\bigg(\Vert z_0 \Vert_{H}+\vert w_0\vert
   +\sqrt{\Vert z_0 \Vert_{H}+\vert w_0\vert}\bigg).
\end{equation} 

This concludes the proof of Lyapunov stability, in the sense given in \cite[Definition 2.3]{prieur}, of  system \eqref{system-final_super_twisting} over the time interval $[0,t_r]$. 

\hfill $\Box$

\section{Illustrative example: Heat equation}
\label{sec:exemple}
Consider the following system,
\begin{equation}
\label{heat equation}
\left\{
\begin{split}
&z_{t}(t,x) = z_{xx}(t,x),\quad (t,x)\in \mathbb{R}_{\geq 0}\times [0,1],\\
&z_x(t,0)=c_0z(t,0),\quad t\in \mathbb{R}_+,\\
&z_x(t,1)=u(t)+d(t),\quad t\in \mathbb{R}_+,\\
&z(0,x)=z_0(x),
\end{split}
\right.
\end{equation}
where $c_0$ is a positive constant,  $u(t)\in \R$ is the control input and $d(t)\in\R$  is an unknown disturbance.

This equation can be written in an abstract way as in \eqref{system_depart} if one sets $H=L^2(0,L)$,
\begin{equation}
\begin{split} 
A:D(A)\subset L^2(0,L)&\rightarrow L^2(0,L),\\
z&\mapsto z^{\prime\prime},
\end{split}
\end{equation}
where 
\begin{equation}
D(A) :=\lbrace z\in \mathcal{H}^2(0,1)\mid z'(0)=c_0z(0);z^\prime(1)=0\rbrace,
\end{equation}
and the control operator $B$ is  the delta function in
$\mathcal{L}(\R,D(A)')$ defined as follow  
\begin{equation}
    \langle \varphi,Bu\rangle_{D(A),D(A)'}=\varphi(1)u
\end{equation}  for all $u\in\R$ and $\varphi \in D(A)$, where $\langle\cdot,\cdot\rangle_{D(A),D(A)'}$ is the dual product. The adjoint operator of $A$ is
\begin{equation}
\begin{split}
A^*: D(A^*)\subset H&\rightarrow H,\\
z & \mapsto  z^{\prime\prime},
\end{split}
\end{equation}
with $D(A^*):= \lbrace z\in \mathcal{H}^2(0,1)\mid z'(0)=c_0z(0);z^\prime(1)=0\rbrace$. It can be checked that the operator $A$ is self-adjoint in $H$. The adjoint of operator of $B$ is 
\begin{equation}
\begin{split}
B^* : D(A^*)&\rightarrow \R\\
\varphi &\mapsto \varphi(1).
\end{split}
\end{equation}
According to \cite[Lemma 2.1 and 2.2]{liu2015active}, $A$ generates a strongly continuous semigroup $(\mathbb{T}(t))_{t\geq0}$ of contractions on $H$ and the operator $B$ is admissible for the semigroup $(\mathbb{T}(t))_{t\geq0}$. Thus, the operators $A$ and $B$ satisfy  the items (i) and (ii) Assumption \ref{assump1}. Moreover, according to \cite[Lemma 2.1]{liu2015active}, the origin of
\begin{equation}
\label{heat_equation}
\left\{
\begin{split}
&z_{t}(t,x) = z_{xx}(t,x),\quad (t,x)\in \mathbb{R}_{\geq 0}\times [0,1],\\
&z_x(t,0)=c_0z(t,0),\quad t\in \mathbb{R}_+,\\
&z_x(t,1)=0,\quad t\in \mathbb{R}_+,\\
&z(0,x)=z_0(x),
\end{split}
\right.
\end{equation}is globally exponentially stable in $H$. As a consequence, Item (iv) of Assumption \ref{assump1}  holds for the operator $L$ equal to the zero operator. 

Since $A$ is self-adjoint, then its  spectrum is real. Therefore, a direct computation gives that the eigenpairs $(\lambda,\varphi_\lambda)$ of $A$ satisfies
\begin{equation}
    \left\{
\begin{split}
&\varphi_\lambda(x) = \cos (\sqrt{-\lambda}x)+\frac{c_0}{\sqrt{-\lambda}}\sin (\sqrt{-\lambda}x),\\
&\sqrt{-\lambda}\tan (\sqrt{-\lambda})=c_0.
\end{split}
\right.
\end{equation}
The function $x\in\R\setminus\{\frac{\pi}{2}+k\pi;k\in \mathbb{Z}\}\mapsto \tan(x)$ is surjective. Thus, the equation $\sqrt{-\lambda}\tan (\sqrt{-\lambda})=c_0$ admits a solution.  Note that $\lambda$ is negative, since the origin of \eqref{heat_equation} is globally exponentially stable in $H$.

Let $\varphi_\lambda\in D(A)$ the eigenfunction of the operator $A$ associated to $\lambda$. The sliding variable and the feedback law under consideration are as follows
\begin{align}\label{control_sigmal_charleur}\notag
&\sigma(t)=\int^L_0 z(t,x)\varphi_\lambda(x) dx\quad\mbox{and}\\
    &u(t)=  -\frac{1}{\varphi_\lambda(1)}\big(\lambda\sigma(t)+K\mathrm{sign}(\sigma(t))\big).
\end{align}
Thus, if we choose $d$ and $K$ as in Assumption \ref{assump2} and \ref{assump3}, we can conclude  that the origin of
\begin{equation}
\label{heat equation_1}
\left\{
\begin{split}
&z_{t}(t,x) = z_{xx}(t,x),\quad (t,x)\in \mathbb{R}_{\geq 0}\times [0,1],\\
&z_x(t,0)=c_0z(t,0),\quad t\in \mathbb{R}_+,\\
&z_x(t,1)\in-\frac{1}{\varphi_\lambda(1)}\big(\lambda\sigma(t)+K\mathrm{sign}(\sigma(t))\big)+d(t),\\&\quad t\in \mathbb{R}_+,\\
&z(0,x)=z_0(x),
\end{split}
\right.
\end{equation}is globally asymptotically stable in $H$.
On the other hand the super-twisting control under consideration is as follows 
 \begin{equation}\label{super twisting control charleur} 
\left\{
\begin{aligned}
& u(t) = \frac{1}{\varphi_\lambda(1)}\bigg(-\lambda\sigma(t) -\alpha\vert\sigma(t)\vert^{\frac{1}{2}} \mathrm{sign}(\sigma(t))+v(t)\bigg),\\
&\dot v(t)\in-\beta\mathrm{sign}(\sigma(t)).
\end{aligned}
\right.
\end{equation}
Therefore,  if we choose $d$ as in Assumption \ref{assump4}, $\beta$ and $\alpha$ as in \eqref{parametre} we can conclude  that the origin of
\begin{equation}
\label{heat equation_2}
\left\{
\begin{split}
&z_{t}(t,x) = z_{xx}(t,x),\quad (t,x)\in \mathbb{R}_{\geq 0}\times [0,1],\\
&z_x(t,0)=c_0z(t,0),\quad t\in \mathbb{R}_+,\\
&z_x(t,1)=\frac{1}{\varphi_\lambda(1)}\bigg(-\lambda\sigma(t) -\alpha\vert\sigma(t)\vert^{\frac{1}{2}} \mathrm{sign}(\sigma(t))+v(t)\bigg)\\&+d(t),\quad t\in \mathbb{R}_+,\\
&z(0,x)=z_0(x),
\end{split}
\right.
\end{equation}is globally asymptotically stable in $H$.

Using the finite difference method \cite{li2017numerical}, we performed some numerical simulations. We choose $\lambda=-2c_0- \pi^2$ which is an approximated solution of $\sqrt{-\lambda}\tan (\sqrt{-\lambda})=c_0$, $c_0 = 0.5$, $K = 2.5$, $z_0(x) = 10x^3$ and $d(t) = 2 \sin{(t)}$. The space and time steps are taken as 0.1 and 0.0001, respectively.
\begin{figure}
  \begin{center}
  \includegraphics[scale=0.5]{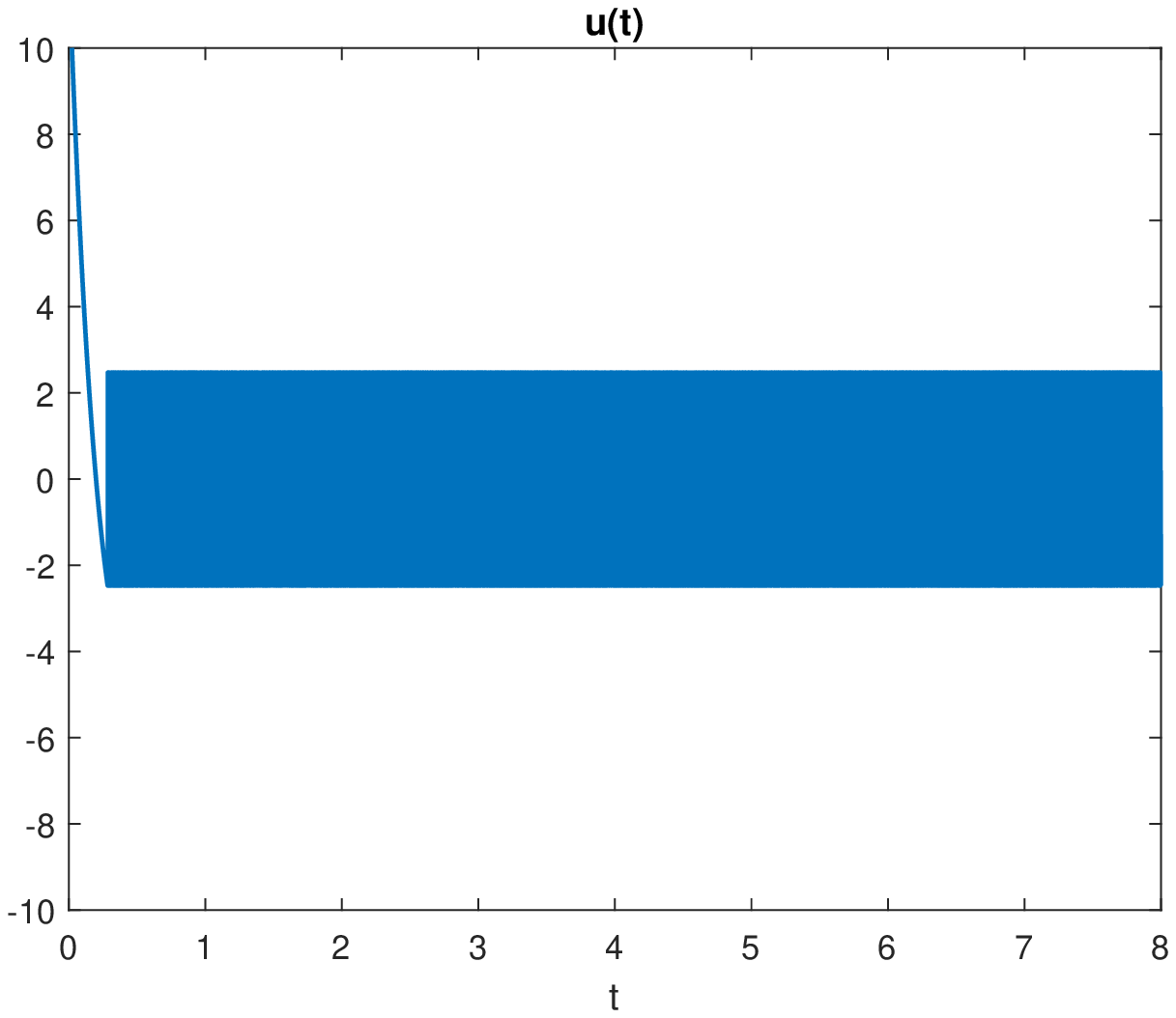}
  \includegraphics[scale=0.5]{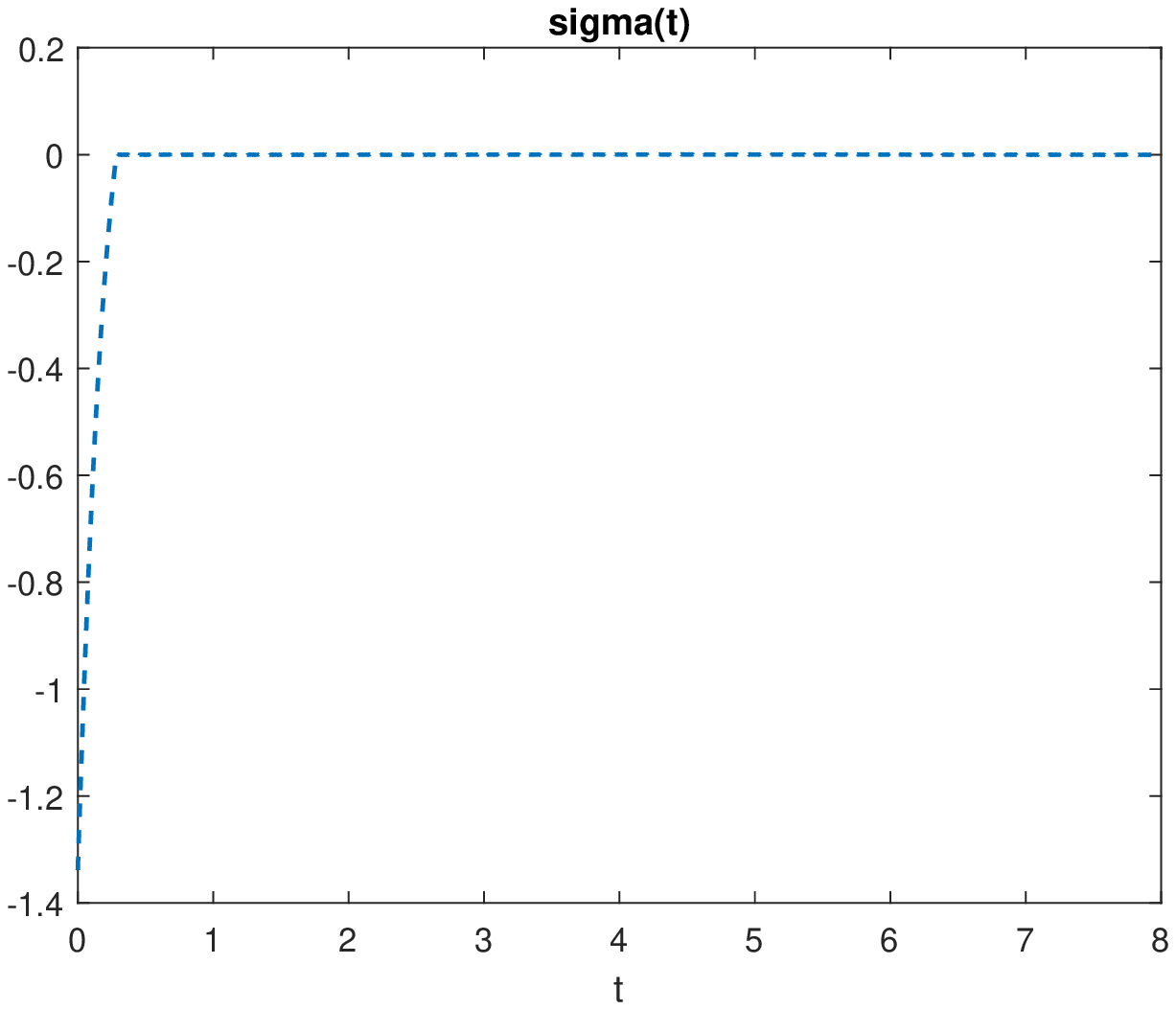}
    \includegraphics[scale=0.5]{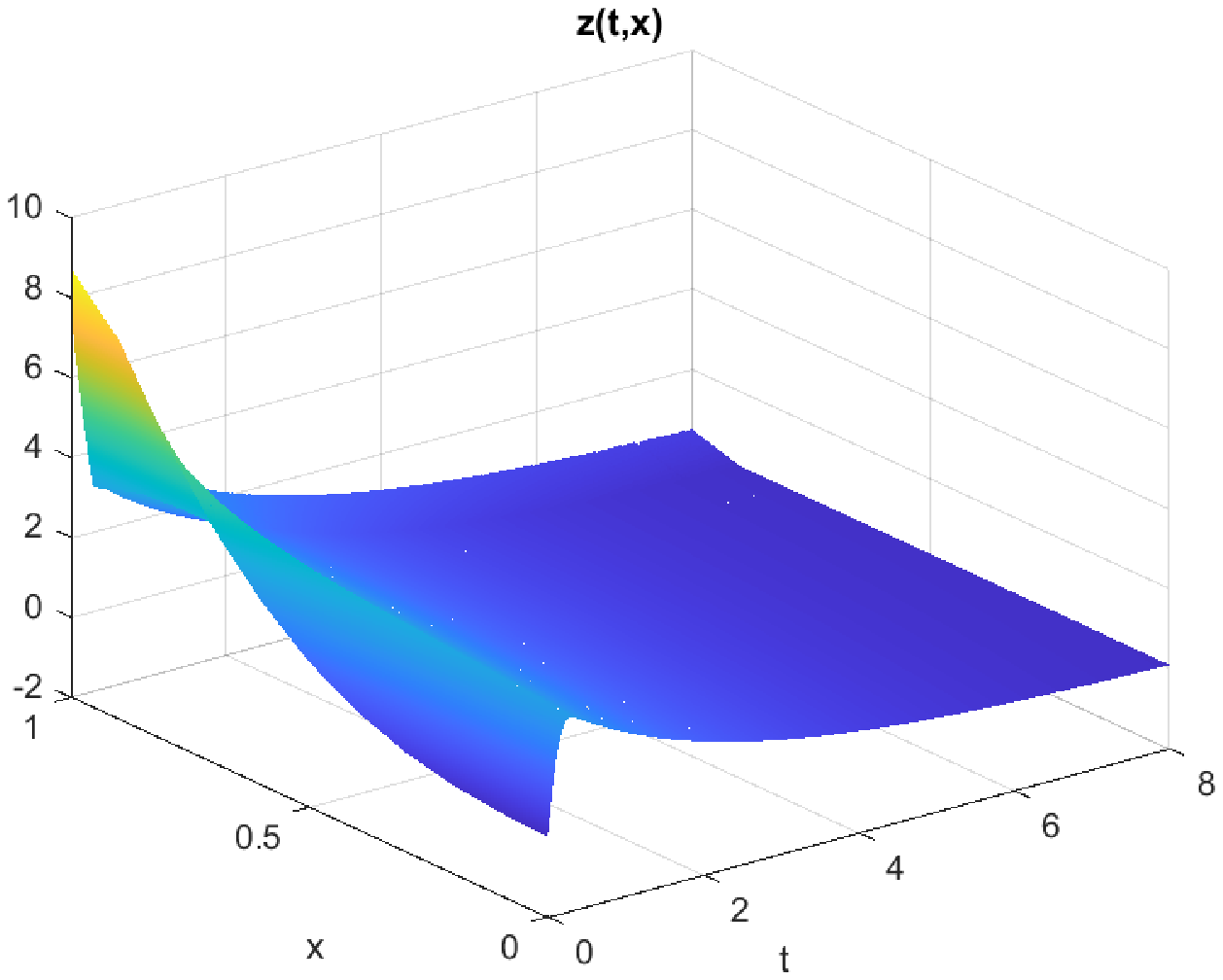}
  \end{center}
  \caption{{\bf Top.} Control input $u(t)$ versus time $t$ (sec). {\bf Middle.} Sliding variable $\sigma(t)$ versus time $t$ (sec). {\bf Bottom.} Solution $z$  versus time $t$ (sec) and position $x$.}
    \label{fig:control_charleur}
\end{figure}

In Figure \ref{fig:control_charleur}- {\bf Top.}  the control input $u$ defined in \eqref{control_sigmal_charleur} makes chattering phenomenon appearing once the sliding variable has converged (see Figure \ref{fig:control_charleur}-{\bf Middle.}). In Figure \ref{fig:control_charleur}-{\bf Bottom.}, the  stabilization of $z$ of \eqref{heat equation_1} is illustrated.
 
 \begin{figure}
  \begin{center}
  \includegraphics[scale=0.5]{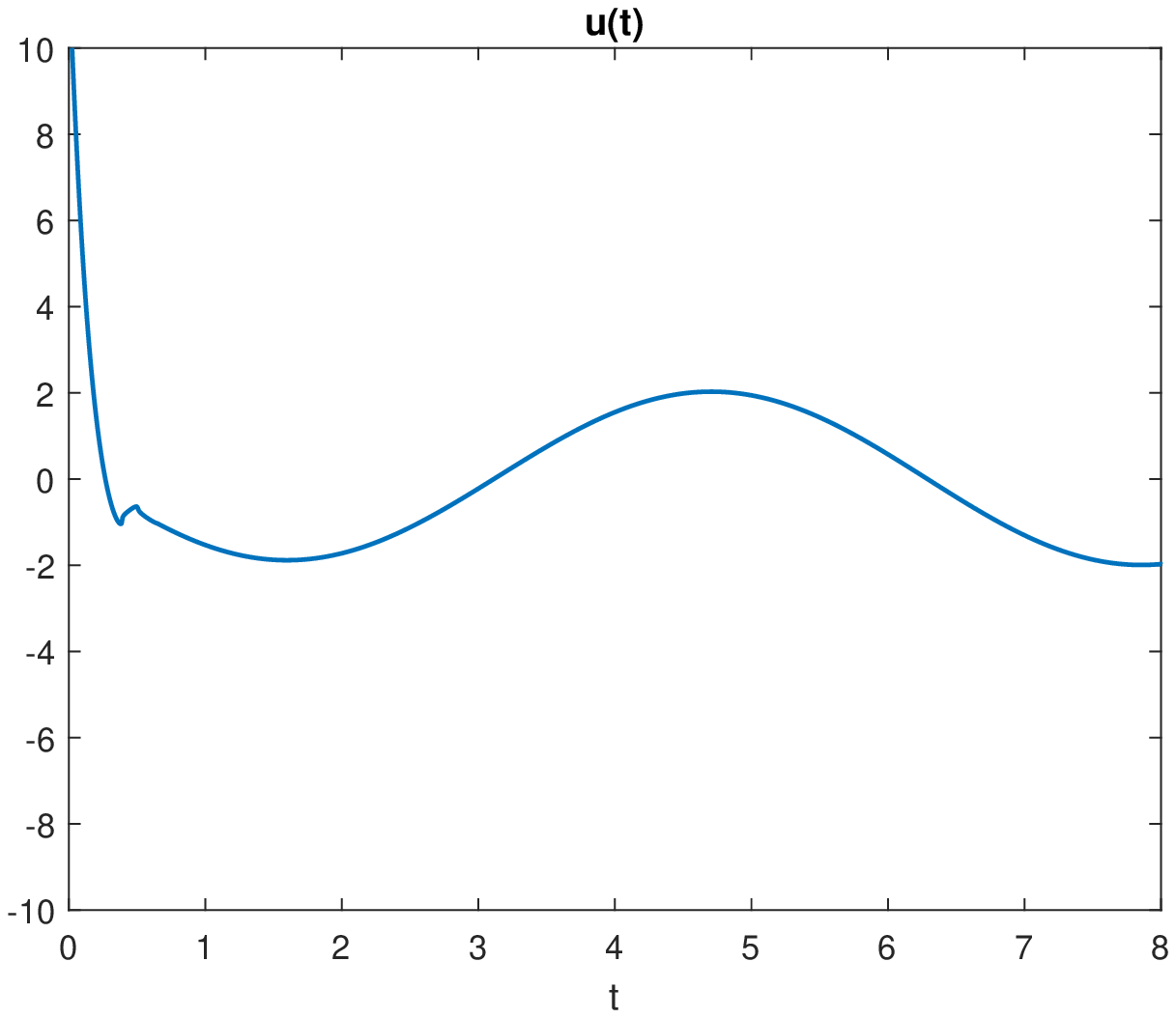}
  \includegraphics[scale=0.5]{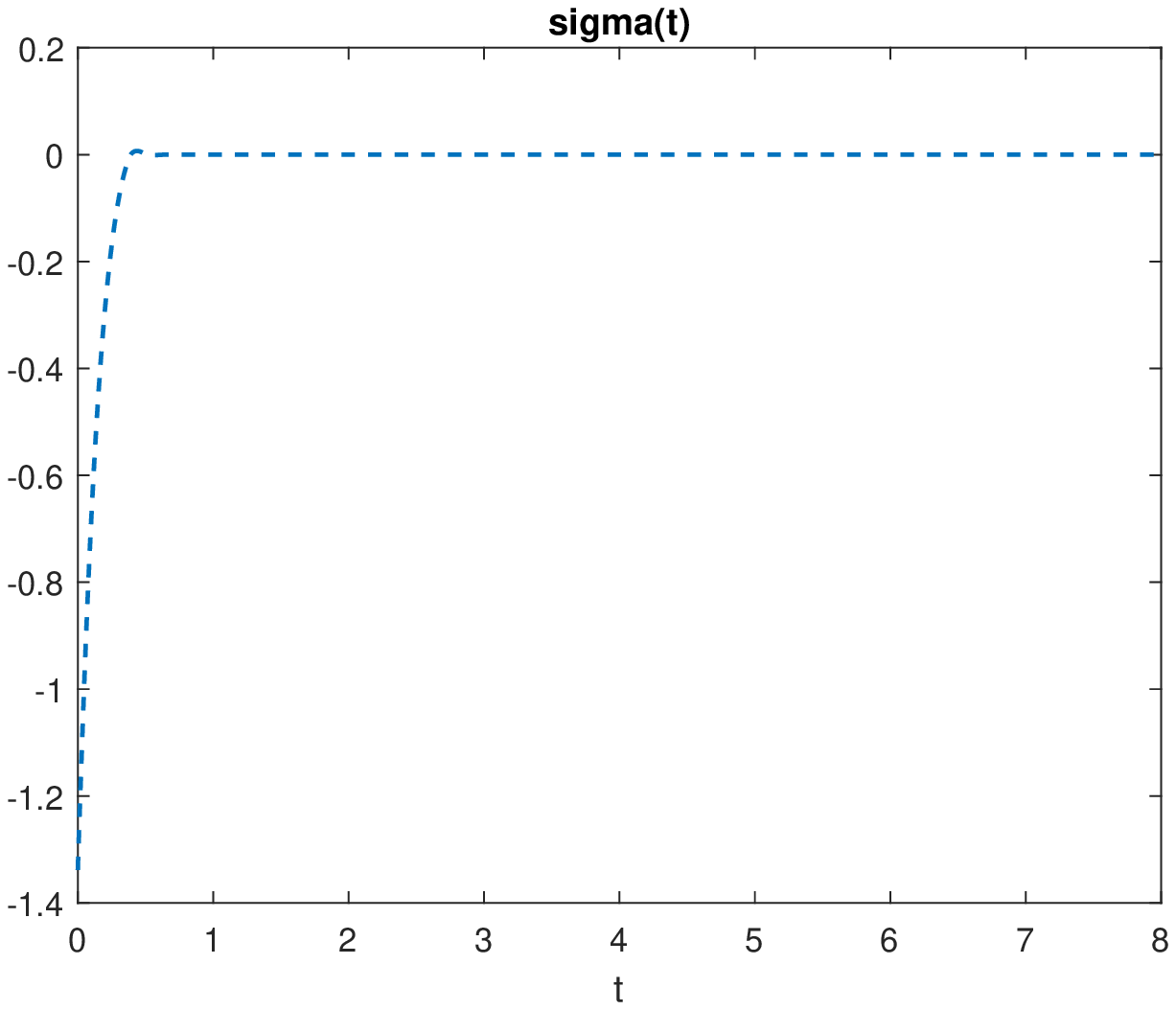}
    \includegraphics[scale=0.5]{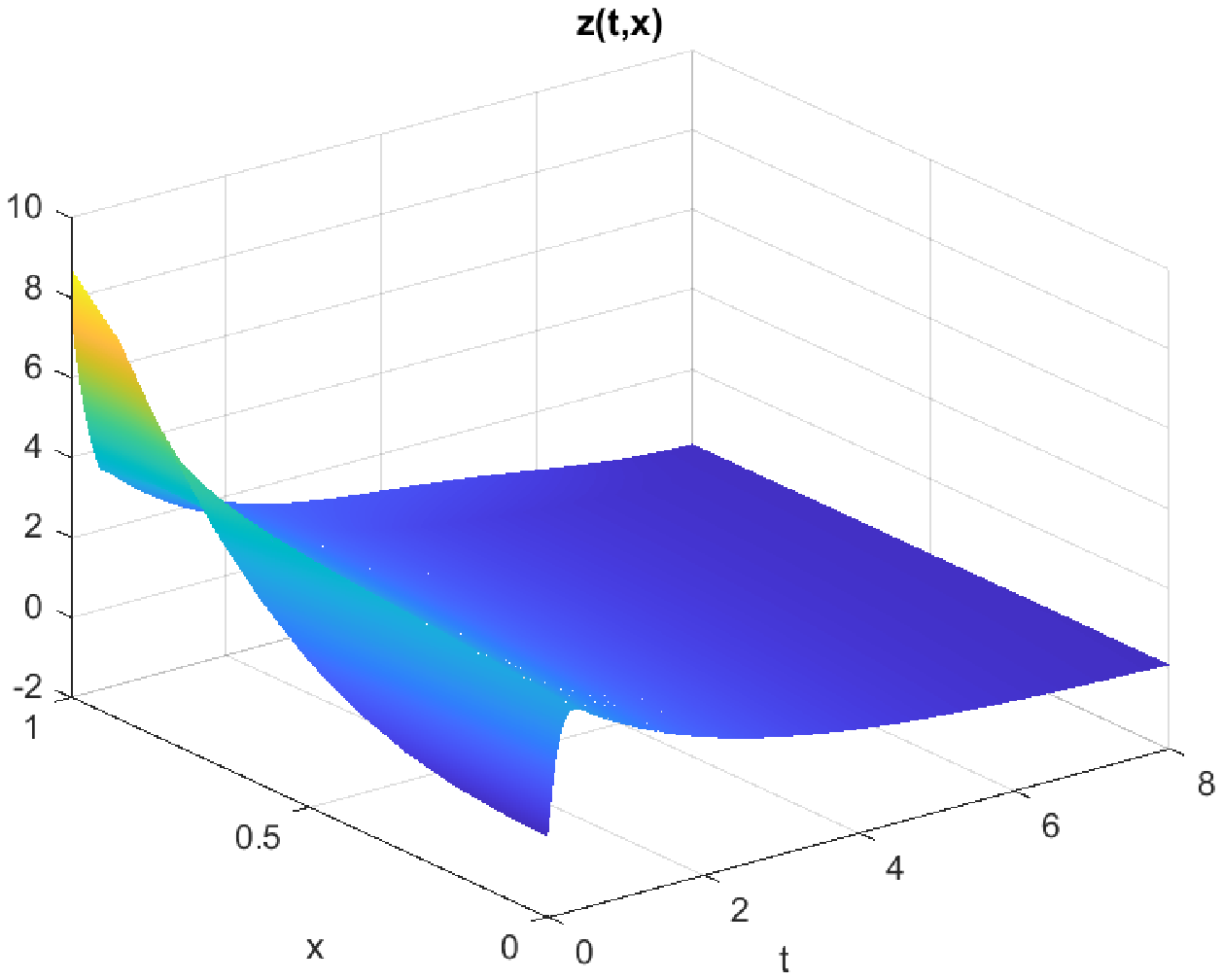}
  \end{center}
  \caption{{\bf Top.} Control input $u(t)$ versus time $t$ (sec). {\bf Middle.} Sliding variable $\sigma(t)$ versus time $t$ (sec). {\bf Bottom.} Solution $z$  versus time $t$ (sec) and position $x$.}
    \label{fig:control_super_twisting}
\end{figure}

Figures \ref{fig:control_super_twisting}  is obtained with the same settings as the Figures \ref{fig:control_charleur} with $\beta=2.5$ and $\alpha=2.2$. It must be noted that, thanks to the use of super twisting algorithm, the chattering on $u$ has been removed (super twisting is continuous) whereas the stabilization is kept.

\section{Conclusion}
\label{sec_conclusion}

In this paper, we have proposed a design method based on sliding mode control  for the stabilization of class of linear infinite-dimensional systems with unbounded control operators and subject to a boundary disturbance. The existence of solutions of the closed-loop system has been proved as well as the disturbance rejection and the asymptotic stability  of the  closed-loop control system. We further have extended the super-twisting method for the same class of linear infinite-dimensional systems.

Future works will consider the case  where the  operator $A$ in \eqref{system_depart} is nonlinear, for which many notions will need to be adapted such as the controllability or the admissibility. It might also be interesting to investigate the case where the disturbance does not match with the control as it has been done for ODEs in \cite{castanos2006analysis}.

\bibliographystyle{abbrv}
\bibliography{biblio}

\end{document}